\begin{document}
\title[Observable Lyapunov irregular sets]{observable Lyapunov irregular sets for 
planar piecewise expanding maps}

\author{Yushi Nakano}
\address[Yushi Nakano]{Department of Mathematics, Tokai University, 4-1-1 Kitakaname, Hiratuka, Kanagawa, 259-1292, JAPAN}
\email{yushi.nakano@tsc.u-tokai.ac.jp}

\author{Teruhiko Soma}
\address[Teruhiko Soma]{Department of Mathematical Sciences, Tokyo Metropolitan University, 1-1 Minami-Ohsawa, Hachioji, Tokyo, 192-0397, JAPAN}
\email{tsoma@tmu.ac.jp}

\author{Kodai Yamamoto}
\address[Kodai Yamamoto]{Joint Graduate School of Mathematics for Innovation, 
Kyushu University, Fukuoka, 819-0395, JAPAN}
\email{yamamoto.kodai.508@s.kyushu-u.ac.jp}

\keywords{Lyapunov exponent, piecewise expanding maps, Lyapunov irregular sets, Birkhoff irregular sets}
\subjclass[2010]{37C05, 37C30, 37C40}%; Primary Secondary 
\date{\today}

\begin{abstract}
For any integer $r$ with $1\leq r<\infty$, 
we present a one-parameter family $F_\sigma$ $(0<\sigma<1)$ of 2-dimensional piecewise $\mathcal C^r$ expanding maps 
such that each $F_\sigma$ has an observable (i.e.\ Lebesgue positive) Lyapunov irregular set.
These maps are obtained by modifying the piecewise expanding map given in Tsujii \cite{tj}.
In strong contrast to it, we  also show that any Lyapunov irregular set of any 2-dimensional piecewise real analytic 
expanding map is not observable.
This is based on the spectral analysis of piecewise  expanding maps in 
Buzzi \cite{bu}.
% and 
%Tsujii \cite{tj_C}.
\end{abstract}

\maketitle

\newtheorem{theorem}{Theorem}[section]
\newtheorem{cor}[theorem]{Corollary}
\newtheorem{lemma}[theorem]{Lemma}
\newtheorem{prop}[theorem]{Proposition}

\newtheorem{mtheorem}{Theorem}
\renewcommand{\themtheorem}{\Alph{mtheorem}}
\newtheorem{mcorollary}[mtheorem]{Corollary}

\theoremstyle{definition}
\newtheorem{definition}[theorem]{Definition}
\newtheorem{example}[theorem]{Example}
\newtheorem{remark}[theorem]{Remark}
\newtheorem{question}[theorem]{Question}

\numberwithin{figure}{section}
\numberwithin{equation}{section}

\def\setl{\setlength{\leftskip}{-18pt}}

\section{Introduction}

Lyapunov exponents are one of important measuring tools in the study of chaotic phenomena from 
mathematical or physical points of view.
In general, it is not easy to calculate the exact value of Lyapunov exponent theoretically.
In practical examples, we usually rely on numerical calculations, but 
we know that it has various technical difficulties.
See \cite[Section 12.5]{pjs} for example.

Let $M$ be a Riemannian manifold and $f:M\longrightarrow M$ a differentiable map such that the support $\mathrm{supp}(f)$ of $f$ is compact.
For a given $x\in M$ and non-zero $\boldsymbol{v}\in T_xM$, we study 
the $\omega$-limit set $\omega(\mathcal{A}_f(x,{\boldsymbol{v}}))$ of the sequence 
\begin{equation}\label{eqn_Af}
\mathcal{A}_f(x,{\boldsymbol{v}})=\left\{ \dfrac1{n}\log\|Df^n(x){\boldsymbol{v}}\|\ \Big|\ n=1,2,\dots\right\}.
\end{equation}
Here, for a sequence $A=\{a_n\}_{n=1}^\infty$ in $\mathbb{R}$, 
the $\omega$-\emph{limit set} $\omega(A)$ of $A$ is the set of points $a$ 
such that, for any $\varepsilon>0$ and any positive integer $n_0$, there exists $n>n_0$ 
with $(a-\varepsilon,a+\varepsilon)\ni a_n$.

If $M$ has an $f$-invariant 
 measure $\mu$ on $\mathrm{supp}(f)$, 
then Oseledets' Multiplicative Ergodic Theorem \cite{os} implies that, for $\mu$-almost every $x\in M$ and 
every non-zero ${\boldsymbol{v}}\in T_xM$, the limit of $\mathcal{A}_f(x,{\boldsymbol{v}})$ exists or equivalently the $\omega$-limit set $\omega(\mathcal{A}_f(x,{\boldsymbol{v}}))$ is a single point set, 
see \cite[Chapter 4]{vi} for details.
The limit is called the \emph{Lyapunov exponent} of $f$ at $x$ with respect to ${\boldsymbol{v}}$.

We are interested in the case when such a limit does not exist.
An element $x$ of $M$ is said to be a \emph{Lyapunov irregular point} of $f$ if 
$\omega(\mathcal{A}_f(x,{\boldsymbol{v}}))$ is not a single point for some non-zero ${\boldsymbol{v}}\in T_xM$.
A subset of $M$ consisting of Lyapunov irregular points  
is called a \emph{Lyapunov irregular set} of $f$.
Oseledets' theorem implies that any Lyapunov irregular set is negligible with respect to any $f$-invariant probability measures 
on $\mathrm{supp}(f)$.
So we consider here the Lebesgue measure instead of invariant measures.
From physical point of view, the Lebesgue measure  often plays an indispensable  role.

We say that a Lebesgue measurable subset $U$ of $M$ is \emph{observable} if 
$\mathrm{Leb}(U)>0$, where $\mathrm{Leb}(U)$ denotes the Lebesgue measure of $U$.
As far as the authors know, the first published example of dynamics with observable Lyapunov irregular set 
was presented by Ott and Yorke \cite{oy}.
They showed that the time-1 map of a figure-eight flow on $\mathbb{R}^2$ has a basin $U$ of attraction any interior points of which 
are Lyapunov irregular.
Their argument was 
complemented by \cite[Proposition 1.2]{klns} with the use of 
the concrete model of figure-eight flow presented by Guarino, Guih\'eneuf and Santiago \cite{ggs}.
Ott and Yorke also suggested that the time-1 map of a Bowen flow have an observable Lyapunov irregular set 
with supporting evidence of numerical experiment.
A strict proof of their suggestion was given by \cite[Proposition 1.1]{klns}.
Colli and Vargas \cite{cv} presented 2-dimensional diffeomorphisms $f:\mathbb{R}^2\longrightarrow \mathbb{R}^2$ 
which have a horseshoe set $\Lambda$ with a homoclinic tangency and a 
contracting wandering domain.
Here we say that a connected open set $U$ in $\mathbb{R}^2$ is a \emph{wandering domain} of $f$ if 
$f^m(U)\cap f^n(U)=\emptyset$ for any $m,n\in \mathbb{N}$ with $m\neq n$.
In \cite{klns},  Colli-Vargas' maps were modified so that they have Lyapunov irregular wandering domains.
On the other hand, it was shown in \cite{nnt}  that the Lyapunov irregular set has zero Lebesgue measure under certain type of noise.

We remark that, for any examples of dynamics $f$ and ${\boldsymbol{v}}\in T_xU\setminus \{\mathbf{0}\}$ given as above, $\sup \omega(\mathcal{A}_f(x,{\boldsymbol{v}}))$ is non-positive.
In this paper, we show that there exists a one-parameter family of piecewise expanding maps $F_\sigma:D\longrightarrow D$ on an open rectangle $D$ in $\mathbb{R}^2$ such that each $F_\sigma$ has an observable Lyapunov irregular set $U_\sigma$ 
with $\inf \omega(\mathcal{A}_{F_\sigma}(x,{\boldsymbol{v}}))>0$ for any $0<\sigma<1$, $x\in U_\sigma$ and ${\boldsymbol{v}}\in T_x(D)\setminus \{\mathbf{0}\}$.
Lyapunov irregular points for piecewise expanding maps are defined similarly 
as in the case of usual differentiable maps, see Subsection \ref{ss_PEM} for details.

\begin{mtheorem}\label{thm_A}
Suppose that $r$ is any integer with $1\leq r<\infty$.
Then there exists a one-parameter family of piecewise $\mathcal C^r$ expanding maps $F_\sigma:D\longrightarrow D$ $(0<\sigma<1)$ 
on an open rectangle $D$ 
which $\mathcal C^r$-converge to another expanding map as $\sigma\to 0$ and 
such that each $F_\sigma$ admits a wandering rectangle $R_\sigma\subset D$ such that, for Lebesgue-almost every $x\in R_\sigma$ and any non-zero ${\boldsymbol{v}}\in T_x D$, the $\omega$-limit set of $\mathcal{A}_{F_\sigma}(x,{\boldsymbol{v}})$ satisfies the following conditions.
\begin{enumerate}[$\bullet$]
\item
$\omega(\mathcal{A}_{F_\sigma}(x,{\boldsymbol{v}}))$ is a non-trivial closed interval 
which converges to a one-point set as $\sigma\to 0$.
\item
$\inf \omega(\mathcal{A}_{F_\sigma}(x,{\boldsymbol{v}}))\geq \rho$ for some constant $\rho>0$ independent of $\sigma$.
\end{enumerate}
\end{mtheorem}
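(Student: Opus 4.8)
The plan is to prove Theorem~\ref{thm_A} by an explicit construction, taking as its starting point the piecewise expanding map $T$ of \cite{tj} and perturbing it inside a $\mathcal C^r$ one-parameter family. First I would isolate the features of $T$ that will be used: it acts on an open rectangle $D$, it is affine (in particular piecewise $\mathcal C^\infty$) on each element of a finite partition $\mathcal P$ of $D$, it is expanding, and -- this being the essence of Tsujii's example -- it admits a distinguished subregion $W$ and a combinatorial pattern for which the forward orbit of a suitably placed small rectangle is cut by the discontinuity set of $\mathcal P$ and reassembled, step by step, into a sequence of rectangles lying in pairwise disjoint elements of $\mathcal P$ whose areas tend to $0$. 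That phenomenon is what will make a wandering rectangle available.

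I would then define $F_\sigma$ by leaving $T$ unchanged outside a small neighbourhood $V$ of a single point $p\in W$ on the relevant orbit segment, and inside $V$ inserting a $\mathcal C^r$ correction whose size is controlled by $\sigma$: take $F_\sigma=T+\sigma\Phi$ with $\Phi$ a fixed $\mathcal C^r$ map that is $r$-flat along $\partial V$ and whose effect is to lower the local longitudinal expansion rate of $T$ on $V$ from a value $\lambda_1$ to a value $\lambda_2(\sigma)$ with $1<\lambda_2(\sigma)<\lambda_1$ and $\lambda_2(\sigma)\to\lambda_1$ as $\sigma\to0$. Since $\Phi$ vanishes to order $r$ on $\partial V$, each $F_\sigma$ is again piecewise $\mathcal C^r$ with the same partition $\mathcal P$, it is expanding once $\sigma$ is small (the correction being $\mathcal C^1$-small), and $F_\sigma\to F_0$ in $\mathcal C^r$ as $\sigma\to0$, where $F_0$ is an expanding map (essentially $T$ itself). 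Next I would pin down the wandering rectangle: choose $R_\sigma\subset W$ so small and so positioned, relative to $V$ and to the discontinuity lines, that (i) its forward iterates are reassembled into pairwise disjoint rectangles as above, so that $R_\sigma$ is genuinely wandering, and (ii) the itinerary of Lebesgue-almost every point of $R_\sigma$ alternates between the region where the expansion rate is $\lambda_1$, during time windows $[n_{2k},n_{2k+1})$, and the region $V$, where it is $\lambda_2(\sigma)$, during $[n_{2k+1},n_{2k+2})$, with $n_{j+1}/n_j\to\infty$; the existence of such an itinerary is a combinatorial statement about $\mathcal P$ near $p$, and the control on the window lengths is engineered exactly as in the Colli--Vargas-type constructions recalled in \cite{cv,klns}.

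Once the construction is fixed, the computation of $\omega(\mathcal A_{F_\sigma}(x,\boldsymbol{v}))$ is the routine part. For $x\in R_\sigma$ the matrix $DF_\sigma^n(x)$, written in the affine chart, is the product of the derivatives of $F_\sigma$ along the orbit of $x$, so its longitudinal stretch $A_n(x)$ is a product of $n$ factors, each equal to $\lambda_1$ or $\lambda_2(\sigma)$ according to the window the corresponding step lies in; the ``last long window dominates'' estimate then yields $\tfrac1n\log A_n(x)\to\log\lambda_1$ along $n=n_{2k+1}$ and $\tfrac1n\log A_n(x)\to\log\lambda_2(\sigma)$ along $n=n_{2k+2}$, while $\tfrac1n\log A_n(x)$ is monotone between consecutive switching times, so $\omega(\{\tfrac1n\log A_n(x)\})=[\log\lambda_2(\sigma),\log\lambda_1]$. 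That this interval also equals $\omega(\mathcal A_{F_\sigma}(x,\boldsymbol{v}))$ for \emph{every} non-zero $\boldsymbol{v}$ and Lebesgue-almost every $x\in R_\sigma$ is where some care is needed: the smaller singular value of $DF_\sigma^n(x)$ may decay -- that is exactly what allows $R_\sigma$ to wander -- so one must rule out that a fixed $\boldsymbol{v}$ stays aligned with the corresponding direction; since the shear (off-diagonal) entries of $DF_\sigma^n(x)$ grow comparably to $A_n(x)$, that direction rotates, and for a.e.\ $x$ one obtains $\tfrac1n\log\|DF_\sigma^n(x)\boldsymbol{v}\|=\tfrac1n\log A_n(x)+o(1)$ for every $\boldsymbol{v}\ne\boldsymbol{0}$, the exceptional $x$ forming a Lebesgue-null subset of $R_\sigma$. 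The two bulleted conclusions follow immediately: $\omega(\mathcal A_{F_\sigma}(x,\boldsymbol{v}))=[\log\lambda_2(\sigma),\log\lambda_1]$ is a non-trivial interval collapsing to $\{\log\lambda_1\}$ as $\sigma\to0$, and its infimum $\log\lambda_2(\sigma)$ is bounded below by a constant $\rho>0$ independent of $\sigma$ since the weak regime is chosen expanding uniformly in $\sigma$.

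The main obstacle is the construction itself, namely reconciling three competing requirements: $F_\sigma$ must remain piecewise expanding; $R_\sigma$ must be genuinely wandering, which forces its forward images to lose area through the cutting-and-reassembling mechanism of Tsujii's example rather than through any loss of pointwise expansion; and the longitudinal rate must oscillate on geometrically growing windows while staying bounded away from $1$. Making all three hold at once is what the modification of $T$ has to accomplish, and the reason the construction cannot be taken real analytic -- consistently with the rigidity statement deduced from \cite{bu} -- is that the controlled, $\sigma$-dependent weakening of the expansion rate on $V$ relies on a finite-order flat correction that no analytic perturbation can realise.
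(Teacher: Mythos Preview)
Your proposal contains a genuine gap at the level of the construction, and it diverges substantially from the mechanism the paper actually uses.

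The fatal problem is your requirement that the orbit of Lebesgue-almost every point of $R_\sigma$ spend time windows $[n_{2k+1},n_{2k+2})$ inside a fixed small neighborhood $V$ of a single point, with $n_{j+1}/n_j\to\infty$ (hence window lengths tending to infinity). This is incompatible with $F_\sigma$ being piecewise expanding. If the minimal expansion rate on $V$ is $\lambda_2(\sigma)>1$, then any subset of $V$ has its diameter multiplied by at least $\lambda_2(\sigma)$ at each step, so a positive-measure piece of $R_\sigma$ can remain inside $V$ for at most $O\bigl(\log(\mathrm{diam}\,V)\bigr)$ iterates before it is forced out. No combinatorial arrangement of the partition near $p$ can circumvent this: cutting-and-reassembling reduces the \emph{area} of the wandering images, but it does not prevent diameters from growing under an expanding map unless the cuts themselves happen inside $V$ at a rate that exactly compensates the expansion --- and then $V$ is no longer a ``small neighborhood of a single point'' but a region threaded by infinitely many discontinuity arcs, which destroys your flat-correction picture $F_\sigma=T+\sigma\Phi$. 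Relatedly, your remark that ``the smaller singular value of $DF_\sigma^n(x)$ may decay --- that is exactly what allows $R_\sigma$ to wander'' is incorrect for an expanding map: all singular values exceed $1$, and the wandering in Tsujii's example comes purely from the cutting, not from any contraction.

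The paper's construction avoids this obstruction by a completely different device. The perturbed map $F_\sigma$ is made \emph{globally} non-conformal: on every partition element the derivative is (up to a translation term that vanishes on the orbit of the wandering rectangle) diagonal with eigenvalues $\lambda_1>\lambda_2>1$, \emph{except} on one piece $D_{1,+}$ where it is the coordinate swap $(\alpha,\beta)\mapsto(\lambda_2\beta,\lambda_1\alpha)$. The wandering rectangle visits $D_{1,+}$ exactly once after $\gamma^m$ steps, once more after $\gamma^{m+1}$ further steps, and so on. Thus a fixed vector $\boldsymbol{v}$ experiences rate $\lambda_1$ in its current ``first'' coordinate for $\gamma^m$ steps, is then swapped, experiences $\lambda_2$ for $\gamma^{m+1}$ steps, is swapped again, etc. The oscillation of $\frac1n\log\|DF_\sigma^n(x)\boldsymbol{v}\|$ comes from this alternation of \emph{which coordinate} is being stretched fast, not from the orbit visiting regions with different rates. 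As a consequence the $\omega$-limit set is \emph{not} $[\log\lambda_2,\log\lambda_1]$ as you compute, but $\bigl[\log\sqrt{\lambda_1\lambda_2},\,\tfrac{\gamma}{\gamma+1}\log\lambda_1+\tfrac{1}{\gamma+1}\log\lambda_2\bigr]$ for generic $\boldsymbol{v}$ (and a slightly larger interval for axis-aligned $\boldsymbol{v}$): the endpoints are genuine weighted averages of $\log\lambda_1$ and $\log\lambda_2$, reflecting the ratio $\gamma$ of successive window lengths, and neither endpoint equals $\log\lambda_i$. The parameter is $\sigma=1-\varepsilon_2/\varepsilon_1$ with $\lambda_i=e^{\varepsilon_i}$, so $\sigma\to 0$ corresponds to $\lambda_2\nearrow\lambda_1$, collapsing the interval.
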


Here an interval $I$ being \emph{non-trivial} means that $I$ is not a one-point set.
In particular, this theorem implies that the rectangle $R_\sigma$ contains an observable Lyapunov irregular subset.
Theorem \ref{thm_A} follows from Theorem \ref{thm_Tj_map} in Section \ref{Proof_thm_A}, where 
$\omega(\mathcal{A}_{F_\sigma}(x,{\boldsymbol{v}}))$ is presented exactly.

\begin{remark}[Lyapunov irregularity versus Birkhoff irregularity]
Let $f:M\longrightarrow M$ be a differentiable map 
on a manifold $M$ such that the support $\mathrm{supp}(f)$ of $f$ is compact.
A point $x$ of $\mathrm{supp}(f)$ is said to be \emph{Birkhoff irregular} with respect to $f$ if the empirical measure $\delta_x^{(n)}=n^{-1}\sum_{k=0}^{n-1} \delta_{f^k(x)}$ does not converge weakly as $n\to\infty$, 
otherwise $x$ is called \emph{Birkhoff regular}.
We say that a subset $U$ of $M$ consisting of Birkhoff (ir)regular points is 
a \emph{Birkhoff (ir)regular set} of $f$.
Birkhoff irregularity is an important criterion in the study of chaotic dynamics as well 
as Lyapunov irregularity.
In general, these criteria are independent of each other.
For example, the observable Lyapunov irregular set for the figure-eight flow given in \cite{klns} 
is Birkhoff regular.
On the other hand, Takens \cite{ta} proved that the observable Lyapunov irregular set for the Bowen flow as above 
is Birkhoff irregular.
As was shown in Theorem A in \cite{klns}, 
there exist two modified Colli-Vargas maps $f_1$, $f_2$ which are arbitrarily $\mathcal C^r$-close to each other 
and have a common observable Lyapunov irregular set  
that is Birkhoff regular with respect to $f_1$ and irregular with respect to $f_2$.
From the construction of the piecewise expanding maps $F_\sigma$ in Theorem \ref{thm_A}, 
it is not hard to see that the wandering rectangle $R_\sigma$ is Birkhoff regular.
More precisely, for any $x\in R_\sigma$, $\delta_x^{(n)}$ converges weakly to the Dirac measure at the origin $(0,0)$. 
\end{remark}

In strong contrast to Theorem \ref{thm_A},  we have the following theorem in the real analytic case.

\begin{mtheorem}\label{thm_B}
Let $F: D\longrightarrow D$ be a piecewise $\mathcal C^\omega$ expanding map on an open rectangle $D$.
Then any Lyapunov irregular set of $F$ has zero Lebesgue measure.
\end{mtheorem}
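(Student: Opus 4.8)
The plan is to exploit the spectral theory of transfer operators for piecewise real analytic expanding maps, as developed by Buzzi \cite{bu}, to show that the Lyapunov exponents exist Lebesgue-almost everywhere. Concretely, let $F:D\to D$ be a piecewise $\mathcal C^\omega$ expanding map. The key structural fact I would invoke is that $F$ admits only finitely many ergodic absolutely continuous invariant probability measures $\mu_1,\dots,\mu_k$, and moreover that Lebesgue-almost every point $x\in D$ is \emph{generic} for one of them, in the sense that the empirical measures $\delta_x^{(n)}$ converge weakly to some $\mu_i$. This is where Buzzi's quasi-compactness of the transfer operator on a suitable Banach space of (piecewise) bounded-variation or analytic-type functions enters: the dominating part of the spectrum is finite-dimensional, the peripheral spectrum decomposes the space of densities into finitely many ergodic components, and the basins of these components cover $D$ up to a Lebesgue-null set. (In the mixing-on-each-component case one even gets exponential decay of correlations, but for the present statement weak convergence of Birkhoff averages for continuous observables suffices.)

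Next I would upgrade this statement about empirical measures to a statement about Lyapunov exponents. The natural observable is $\varphi(x)=\log\|DF(x)\|$, or more precisely one wants the full cocycle $x\mapsto DF(x)\in \mathrm{GL}(2,\mathbb R)$ rather than a scalar, since in two dimensions there may be two distinct Lyapunov exponents and a given vector $\boldsymbol v$ generically feels the top one. I would argue as follows: because $F$ is piecewise expanding with $\mathcal C^\omega$ (hence $\mathcal C^{1+\mathrm{BV}}$, and with bounded distortion) branches, the function $\log|\det DF|$ and the entries of $\log$ of the derivative cocycle are bounded and have bounded variation away from the (finitely many, analytic) discontinuity curves, so they are $\mu_i$-integrable for each $i$. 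Applying the subadditive ergodic theorem to the cocycle $A_n(x)=DF^n(x)$ over each ergodic system $(F,\mu_i)$ gives constants $\lambda_1^{(i)}\ge\lambda_2^{(i)}$ which are the $\mu_i$-a.e.\ values of $\frac1n\log\|DF^n(x)\|$ (for the top one) and of $\frac1n\log\|\wedge^2 DF^n(x)\|=\frac1n\log|\det DF^n(x)|$ (for the sum). The point is that these limits exist and are constant $\mu_i$-a.e.

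The remaining, and I expect the most delicate, step is to pass from ``$\mu_i$-a.e.'' to ``Lebesgue-a.e.'' for the quantity $\frac1n\log\|DF^n(x)\boldsymbol v\|$. For scalar Birkhoff averages this is exactly the statement that Lebesgue-a.e.\ $x$ lies in the basin of some $\mu_i$, which I get from Buzzi. But $\frac1n\log\|DF^n(x)\boldsymbol v\|$ is \emph{not} a Birkhoff average of a fixed function — it is governed by a matrix cocycle — so one cannot directly feed it into the empirical-measure statement. The standard remedy, which I would carry out here, is to work on the projectivized system $\widehat F:D\times \mathbb{P}^1\to D\times\mathbb P^1$, $\widehat F(x,[\boldsymbol v])=(F(x),[DF(x)\boldsymbol v])$, note that $\frac1n\log\|DF^n(x)\boldsymbol v\|=\frac1n\sum_{j=0}^{n-1}\psi(\widehat F^j(x,[\boldsymbol v]))$ with $\psi(x,[\boldsymbol v])=\log\frac{\|DF(x)\boldsymbol v\|}{\|\boldsymbol v\|}$, and then argue that $\widehat F$ is itself (semi-)conjugate to, or can be analyzed by, a transfer operator to which Buzzi's machinery — or an Oseledets/Ledrappier-type argument using the fine structure of absolutely continuous invariant measures for piecewise expanding maps — applies, so that Lebesgue$\times$(anything)-a.e.\ point of $D\times\mathbb P^1$ has convergent Birkhoff averages for $\psi$. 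Since $\frac1n\log\|DF^n(x)\boldsymbol v\|$ depends on $\boldsymbol v$ only through $[\boldsymbol v]$, convergence for a.e.\ $(x,[\boldsymbol v])$, together with the fact that for fixed $x$ the map $[\boldsymbol v]\mapsto \lim_n\frac1n\log\|DF^n(x)\boldsymbol v\|$ takes at most two values (the Oseledets exponents at $x$, which exist whenever $x$ is generic and the cocycle is integrable) and the smaller one is attained only on a single direction, lets me conclude that for Lebesgue-a.e.\ $x$ the limit exists for \emph{every} non-zero $\boldsymbol v$. Hence the set of Lyapunov irregular points is Lebesgue-null. The main obstacle is precisely making the cocycle-to-scalar reduction on $D\times\mathbb P^1$ rigorous in the piecewise analytic category — ensuring the projective extension still falls within the scope of Buzzi's spectral results (the relevant Banach space, the admissibility of the new discontinuity set, and integrability of $\psi$ against the lifted invariant measures) — and I would spend the bulk of the proof on that point, keeping everything else at the level of standard ergodic-theoretic bookkeeping.
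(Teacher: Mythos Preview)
Your skeleton---Buzzi's quasi-compactness of the transfer operator yields finitely many ergodic a.c.i.m.'s $\mu_1,\dots,\mu_q$, then apply Oseledets on each---matches the paper's. The gap is in the step you yourself flag as delicate. The projectivized map $\widehat F$ on $D\times\mathbb P^1$ is not piecewise expanding (the fibre action is projective, not an expansion), so Buzzi's machinery does not apply to it. And even if you had convergence of $\frac1n\sum\psi\circ\widehat F^j$ for Lebesgue-a.e.\ $x$ and a.e.\ $[\boldsymbol{v}]$, this would not give convergence for \emph{every} $[\boldsymbol{v}]$: your claim that ``the smaller exponent is attained only on a single direction'' already presupposes the Oseledets splitting at $x$, which is precisely what you are trying to establish. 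Genericity of $x$ for some $\mu_i$ does not force Lyapunov regularity at $x$---the wandering rectangle in Theorem~\ref{thm_A} is Birkhoff regular but Lyapunov irregular---and one can write down diagonal $2\times2$ matrix sequences for which $\frac1n\log\|A_n\boldsymbol{v}\|$ converges for all $\boldsymbol{v}$ off the coordinate axes yet oscillates on them.

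The paper's route avoids the projective extension entirely. The idea you are missing is a \emph{finite-time hitting} statement (Proposition~\ref{prop:0317}): for Lebesgue-a.e.\ $x$ there exists $N$ with $F^N(x)\in\bigcup_j\mathrm{supp}(\mu_j)$. This is strictly stronger than weak convergence of empirical measures and is extracted from the spectral decomposition of $\mathcal L$ by showing $\int_D 1_A\circ F^{nk}\,d\mathrm{Leb}\to 0$ for $A=D\setminus\bigcup_j\mathrm{supp}(\mu_j)$. Once $F^N(x)$ lands in $\mathrm{supp}(\mu_j)$, Oseledets furnishes a $\mu_j$-full set $X_j\subset\mathrm{supp}(\mu_j)$ of Lyapunov regular points; since $\mu_j$ has positive density on its support, $\mathrm{supp}(\mu_j)\setminus X_j$ is also Lebesgue-null; hence Lebesgue-a.e.\ $x$ has $F^N(x)\in X_j$ for some $N,j$, and the elementary bound
\[
\Bigl|\tfrac1n\log\|DF^n(x)\boldsymbol{v}\|-\tfrac1n\log\|DF^n(F^Nx)\,DF^N(x)\boldsymbol{v}\|\Bigr|\leq\tfrac{2N}{n}\sup_{y,i}\log\|DF_i(y)\|\to0
\]
transfers regularity back to $x$ for every nonzero $\boldsymbol{v}$.
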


As we will see in Section \ref{S_wo_LIS}, the conclusion of Theorem \ref{thm_B} may hold in more general setting.
For example, one can literally repeat the argument in the proof of Theorem \ref{thm_B} 
if the  Perron--Frobenius operator of a piecewise expanding map satisfies the Lasota--Yorke inequality on a Banach space whose intersection with $L^\infty(\mathrm{Leb})=L^\infty(D,\mathrm{Leb})$ is dense in $L^\infty(\mathrm{Leb})$ (see Section \ref{S_wo_LIS} for precise definitions).
We will not pursue such generalizations because the main focus of the present paper is the Lyapunov irregular sets of piecewise expanding maps on the plane, 
but 
we just refer to   \cite{th,ba2} and references therein about geometric conditions under which  the  Perron--Frobenius operator of a piecewise $\mathcal C^2$ expanding map   of general dimension satisfies the Lasota--Yorke inequality. 
We finally remark that, since it is classically known after Lasota--Yorke \cite{ly} that the Perron--Frobenius operator of any 1-dimensional piecewise $\mathcal C^2$ expanding map satisfies the Lasota--Yorke inequality  on the space of bounded variation functions (whose intersection with $L^\infty(\mathrm{Leb})$ is dense in $L^\infty(\mathrm{Leb})$), 
any 1-dimensional piecewise $\mathcal C^2$ expanding map cannot have an observable Lyapunov irregular set    
unlike 2-dimensional maps in Theorem \ref{thm_A}.

\section{One-parameter families of piecewise expanding maps}\label{S_NCP}

\subsection{Piecewise expanding maps on planar domains}\label{ss_PEM}

Suppose that either $r$ is a positive integer or $r=\infty$ or $r=\omega$ 
and $D$ is a bounded region in $\mathbb{R}^2$ such that 
the topological boundary $\partial D$ of $D$ consists of finitely many simple regular $\mathcal C^r$-curves of finite 
length the interiors of which are mutually disjoint.
Let $\alpha_1,\dots,\alpha_u$ be simple regular $\mathcal C^r$-curves in the closure $\overline D=D\cup \partial D$ 
of $D$ with mutually disjoint interior and such that 
the union $\alpha_1\cup\cdots\cup\alpha_u$ is a 
graph in $\overline D$ any degree-one vertex of 
which is contained in $\partial D$.
Let $D_1,\dots,D_p$ be the components of $D'=D\setminus \alpha_1\cup\cdots\cup \alpha_u$.
A $\mathcal C^r$-map $F:D'\longrightarrow D$ is said to be a $\mathcal C^r$-\emph{piecewise expanding map} on $D$ if there exists a constant $\lambda>1$ and an open neighborhood $\mathcal{N}(\overline D_i)$ of $\overline D_i$ in $\mathbb{R}^2$ 
satisfying the following condition for any $i=1,\dots,p$.
\begin{enumerate}[$\bullet$]
\item
The restriction $F|_{D_i}$ is extended to a $\mathcal C^r$-diffeomorphism $F_i:\mathcal{N}(\overline D_i)\longrightarrow \mathbb{R}^2$ 
such that $\|DF_i(x){\boldsymbol{v}}\|\geq \lambda\|{\boldsymbol{v}}\|$ for any $x\in \mathcal{N}(\overline D_i)$ and ${\boldsymbol{v}}\in T_{x}\mathcal{N}(\overline D_i)$.
\end{enumerate}
Usually such a piecewise expanding map is denoted simply as $F:D\longrightarrow D$.
It follows from the definition that 
\begin{equation}\label{eqn_LebU}
\mathrm{Leb}(U)=\mathrm{Leb}\biggl(U\cap \bigcap_{n=0}^\infty F^{-n}(D')\biggr)
\end{equation}
holds for any open set $U$ of $D$.
A point $x$ of $D$ is called \emph{Lyapunov irregular} if 
$x\in \bigcap_{n=0}^\infty F^{-n}(D')$ and $\omega(\mathcal{A}_F(x,{\boldsymbol{v}}))$ is not a single point set 
for some non-zero ${\boldsymbol{v}}\in T_{x}(D)$.

\subsection{Non-conformal piecewise expanding maps}
Piecewise expanding maps used in the proof of Theorem \ref{thm_A} are modifications of that in 
Tsujii \cite{tj}.
Tsujii's map is conformal on the orbit of the concerned wandering domain, while ours are non-conformal.

Throughout the remainder of this section, suppose that $1\leq r<\infty$ 
and $D$ is the open rectangle $(0,1)\times (-1,1)$. 
We will define a 
one-parameter family of piecewise $\mathcal C^r$-expanding maps $F_\sigma:D\longrightarrow D$ $(0< \sigma<1)$ $\mathcal C^r$-converging to the 
piecewise expanding map in \cite{tj}.

Fix an integer $\gamma>r$ and a small $\varepsilon_1>0$ with 
\begin{equation}\label{eqn_ve_gamma}
\varepsilon_1<\gamma^{-3}(2-r^{-1})^2\log 2.
\end{equation}
We set $\lambda_1=\exp(\varepsilon_1)$ and take $\lambda_2=\exp(\varepsilon_2)$ with $1<\lambda_2<\lambda_1$ arbitrarily.
Our piecewise expanding maps depend on $\varepsilon_2$ for a fixed $\varepsilon_1$.
Suppose that $\sigma=1-\varepsilon_2/\varepsilon_1$ is the parameter of $F_{\varepsilon_2}$.
Then $\sigma\searrow 0$ is equivalent to $\varepsilon_2\nearrow \varepsilon_1$.
The map $F_{\varepsilon_2}$ is rewritten as $F_\sigma$ or $F$ shortly.

Consider a $\mathcal C^r$-function $f=f_\sigma:[0,\exp(-\varepsilon_1)]\longrightarrow [0,\exp(-\varepsilon_2))$ 
with $f(0)=f(\exp(-\varepsilon_1))=0$ and the splitting of $D$ into the following eight regions, 
where $\delta=\pm$.
\begin{align*}
D_{0,\delta}&=\{(x,y)\in \mathbb{R}^2\,|\, 0<x<\exp(-\varepsilon_1), \delta f(x)<\delta y<\exp(-\varepsilon_2)\},\\
D_{1,\delta}&=\{(x,y)\in \mathbb{R}^2\,|\, \exp(-\varepsilon_1)<x<1, 0<\delta y<\exp(-\varepsilon_2)\},\\
D_{2,\delta}&=\{(x,y)\in \mathbb{R}^2\,|\, 0<x<\exp(-\varepsilon_1), \exp(-\varepsilon_2)<\delta y<1\},\\
D_{3,\delta}&=\{(x,y)\in \mathbb{R}^2\,|\, \exp(-\varepsilon_1)<x<1, \exp(-\varepsilon_2)<\delta y<1\}.
\end{align*}
See Figure \ref{f_part_D}.
%%%%%%%%%%%%%%%%%%%%%%%%%%%%%%%%%%%%%%%%%%%%%%%%%%%%%%%
\begin{figure}[hbtp]
\centering
\includegraphics[width=71.3mm]{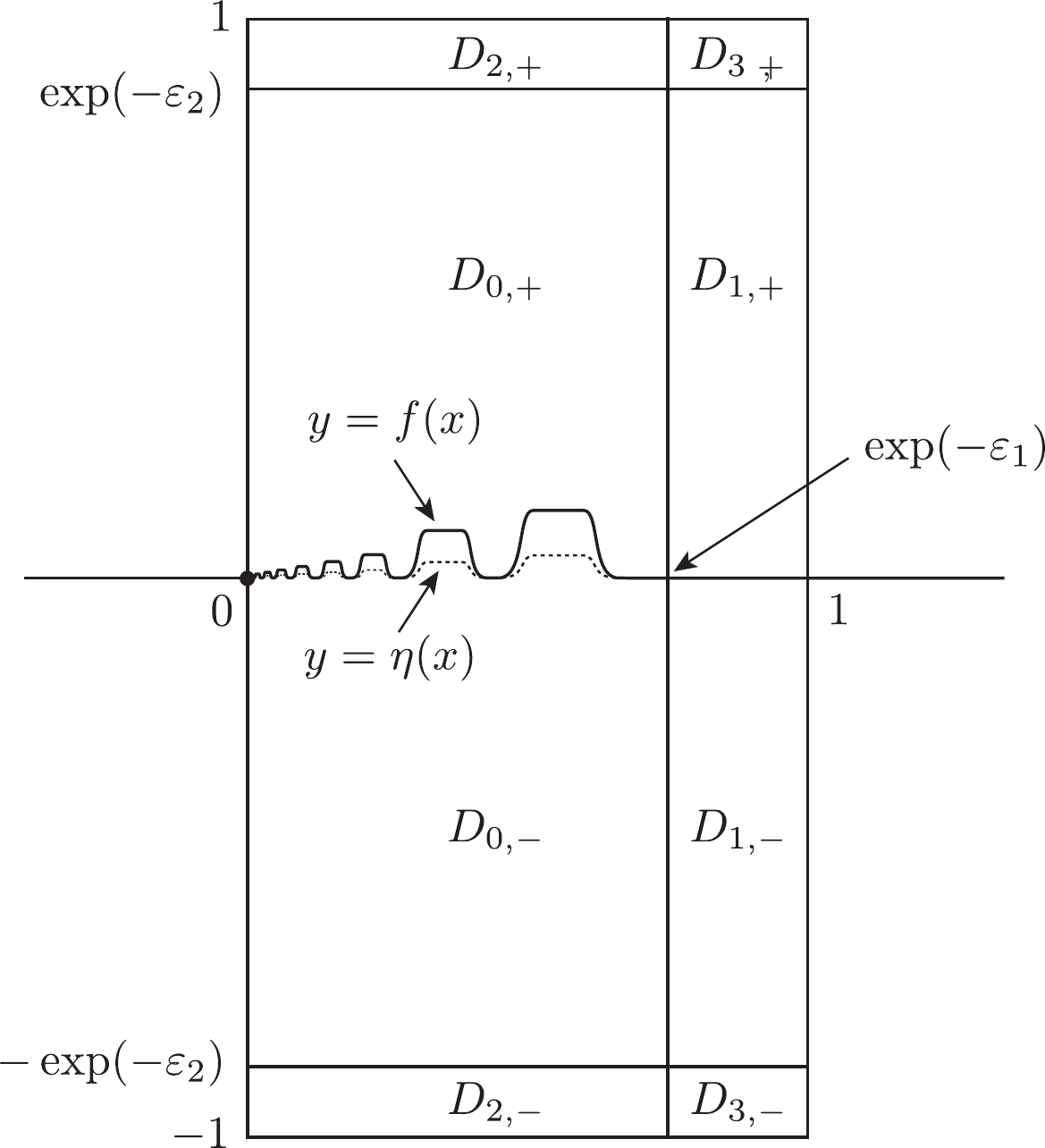}
%\scalebox{0.6}{\includegraphics[clip]{f_part_D}}
\caption{Splitting of $D$.}
\label{f_part_D}
\end{figure}
%%%%%%%%%%%%%%%%%%%%%%%%%%%%%%%%%%%%%%%%%%%%%%%%%%%%%%%

We need to consider another $\mathcal C^r$-function $\eta=\eta_\sigma:[0,\exp(-\varepsilon_1)]\longrightarrow [0,1]$ with $\eta(x)\leq f(x)$ for 
any $x\in [0,\exp(\varepsilon_1)]$.
Then a piecewise $\mathcal C^r$-expanding map $F=F_\sigma:D\longrightarrow D$ is defined as follows.
\begin{equation}\label{eqn_F}
F(x,y)=
\begin{cases}
(\lambda_1 x,\lambda_2(y-\eta(x)))&\text{for }(x,y)\in D_{0,+},\\
(\lambda_1 x,\lambda_2 y)&\text{for }(x,y)\in D_{0,-},\\
(\delta\lambda_2 y,\lambda_1(x-\exp(-\varepsilon_1)))&\text{for }(x,y)\in D_{1,\delta},\\
(\lambda_1 x,\lambda_2(y-\delta \exp(-\varepsilon_2)))&\text{for }(x,y)\in D_{2,\delta},\\
(\lambda_1(x-\exp(-\varepsilon_1)),\lambda_2(y-\delta\exp(-\varepsilon_2)))&\text{for }(x,y)\in D_{3,\delta}.
\end{cases}
\end{equation}
Let ${\boldsymbol{e}}_1=(1,0)$, ${\boldsymbol{e}}_2=(0,1)$ be the unit vectors tangent to $D$ at ${\boldsymbol{x}}=(x,y)\in D$ under the 
natural identification of $T_{\boldsymbol{x}} D=\mathbb{R}^2$.
By \eqref{eqn_F}, 
for any ${\boldsymbol{v}}=\alpha{\boldsymbol{e}}_1+\beta{\boldsymbol{e}}_2\in T_{{\boldsymbol{x}}}D$ with ${\boldsymbol{x}}\in D_{0,+}\cup D_{0,-}\cup D_{1,+}$, 
we have 
\begin{equation}\label{eqn_DF}
DF({\boldsymbol{x}}){\boldsymbol{v}}=
\begin{cases}
\alpha\lambda_1 {\boldsymbol{e}}_1+\left(-\alpha\dfrac{d\eta}{dx}(x)+\beta\right)\lambda_2{\boldsymbol{e}}_2&\text{for }{\boldsymbol{x}}\in D_{0,+},\\
\alpha\lambda_1 {\boldsymbol{e}}_1+\beta\lambda_2{\boldsymbol{e}}_2&\text{for }{\boldsymbol{x}}\in D_{0,-},\\
\beta\lambda_2{\boldsymbol{e}}_1+\alpha\lambda_1 {\boldsymbol{e}}_2&\text{for }{\boldsymbol{x}}\in D_{1,+}.
\end{cases}
\end{equation}

\subsection{Bottom, left-side, height and width sequences}
Now we modify the four double-indexed sequences given 
in \cite{tj}, which are used to define a wandering rectangle of $F$.

Suppose that $M$ is a sufficiently large integer, which will be fixed later.
For any integers $m$, $j$ with $m>M$, let 
$$\zeta_{2j-1}^{(m)}=\varepsilon_1\gamma^m\quad\text{and}\quad\zeta_{2j}^{(m)}=\varepsilon_2\gamma^m.$$
Then we set  
\begin{align*}
B(k,m)&=\exp(\varepsilon_2k)\biggl(\,\sum_{i=0}^\infty\exp\biggl(-\sum_{j=0}^{2i+1}\zeta_j^{(m+j)}\biggr)\biggr),\\
L(k,m)&=\exp(\varepsilon_1k)\biggl(\exp\bigl(-\zeta_1^{(m)}\bigr)+\sum_{i=0}^\infty \exp\biggl(-\sum_{j=0}^{2i+2}\zeta_{j+1}^{(m+j)}\biggr)\biggr)
\end{align*}
for any $0\leq k\leq \gamma^m-1$.
It follows from the definitions that  
\begin{align*}
B(k+1,m)&=\lambda_2B(k,m),\quad L(k+1,m)=\lambda_1 L(k,m)\quad \text{for }\ 0\leq k\leq\gamma^m-2,\\
B(0,m+1)&=\lambda_1(L(\gamma^m-1,m)-\exp(-\varepsilon_1)),\\
L(0,m+1)&=\lambda_2 B(\gamma^m-1,m).
\end{align*}
One can take $M$ so that, for any $m>M$ and $1\leq k\leq \gamma^m$, the inequalities 
\begin{align*}
&B(k,m)\leq B(\gamma^m-1,m)<\exp (-\varepsilon_1\gamma^{m+1}),\quad B(\gamma^{m+1}-k,m+1)<\frac{B(\gamma^m-k,m)}2,\\
&0<L(\gamma^m-k,m)-\exp(-\varepsilon_1k)
<\Delta(k-1)
\end{align*}
hold, where 
$$\Delta(k-1)=\frac{\exp(-\varepsilon_1(k-1))-\exp(-\varepsilon_1k)}6.$$
We also consider the following sequences.
\begin{align*}
H(k,m)&=\exp\biggl((\varepsilon_2-\log 2)k+\sum_{i=0}^{m-1}\zeta_{m-i}^{(i)}-(1-\gamma^{-1})\log 2\sum_{i=1}^{[m/2]}\gamma^{m-2i}\biggr)\\
&\hspace{180pt}\text{for}\quad 0\leq k<\gamma^m-\gamma^{m-1},\\
H(k,m)&=\exp\biggl(\varepsilon_2k+\sum_{i=0}^{m-1}\zeta_{m-i}^{(i)}-(1-\gamma^{-1})\log 2\sum_{i=1}^{[m/2]}\gamma^{m-2i}\biggr)\\
&\hspace{180pt}\text{for}\quad \gamma^m-\gamma^{m-1}\leq k\leq \gamma^m-1,\\
W(k,m)&=\exp\biggl(\varepsilon_1k+\sum_{i=0}^{m-1}\zeta_{m-1-i}^{(i)}-(1-\gamma^{-1})\log 2\sum_{i=1}^{[(m-1)/2]}\gamma^{m-2i-1}\biggr)\\
&\hspace{180pt}\text{for}\quad 0\leq k\leq \gamma^m-1.
\end{align*}
It follows from the definitions that
\begin{align*}
W(k+1,m)&=\lambda_1 W(k,m)\qquad\text{for}\quad 0\leq k\leq \gamma^m-2,\\
H(k+1,m)&=\dfrac{\lambda_2}2H(k,m)\qquad\text{for}\quad 0\leq k< \gamma^m-\gamma^{m-1},\\
H(k+1,m)&=\lambda_2H(k,m)\qquad\hspace{2.5pt}\text{for}\quad \gamma^m-\gamma^{m-1}\leq k\leq \gamma^m-2,\\
W(0,m+1)&=\lambda_2 H(\gamma^m-1,m),\quad H(0,m+1)=\lambda_1 W(\gamma^m-1,m).
\end{align*}

By the condition \eqref{eqn_ve_gamma} on $\varepsilon_1$, 
if necessary retaking $M$ by a larger integer, we may assume that 
\begin{align*}
W(\gamma^m-k,m)&\leq \exp(-\varepsilon_1\gamma^{m+2})<\Delta(k-1)\quad\text{and}\\
H(\gamma^m-k,m)&<B(\gamma^m-k,m)<\exp(-\varepsilon_1\gamma^{m+1})
\end{align*}
hold for any $1\leq k\leq \gamma^m$.

Consider the closed intervals 
$I_k=[\exp(-\varepsilon_1k),\exp(-\varepsilon_1k)+6\Delta(k-1)]$ and $J_k=[\exp(-\varepsilon_1k),\exp(-\varepsilon_1k)+2\Delta(k-1)]$.
Let $K_k$ be the $\Delta(k)$-neighborhood of $J_k$ in $\mathbb{R}$.
Then $K_k$ $(k=1,2,\dots)$ are mutually disjoint closed intervals.
If we take $M$ sufficiently large, then we can have functions $f$, $\eta$ as above 
satisfying the following conditions.
\begin{enumerate}[$\bullet$]
\item
$f(x)=0$ and $\eta(x)=0$ if $\exp(-\varepsilon_1\gamma^M)\leq x\leq \exp(-\varepsilon_1)$.
\item
$f(x)=0$ and $\eta(x)=0$ if $x\not\in \bigcup_{k=1}^\infty K_k$.
\item
For any $x\in J_k$ and $k>\gamma^M$,
\begin{equation}\label{eqn_etaH}
\begin{split}
f(x)&=B(\gamma^{m(k)}-k,m(k))+\frac12 H(\gamma^{m(k)}-k,m(k)),\\
\eta(x)&=\frac12 H(\gamma^{m(k)}-k,m(k)),
\end{split}
\end{equation}
where $m(x)$ is the integer with $\gamma^{m(k)-1}<k\leq \gamma^{m(k)}$.
\item
Both $f$ and $\eta$ $\mathcal C^r$-depend on the parameter $\sigma=1-\varepsilon_2/\varepsilon_1$.
\end{enumerate}

\section{Maps with observable Lyapunov irregular sets}\label{Proof_thm_A}

Let $F=F_\sigma:D\longrightarrow D$ be the $\mathcal C^r$-piecewise expanding maps with $1\leq r<\infty$ and 
$m\geq 1$ the integer given in Section \ref{S_NCP}.
In this section, we will show that 
$F$ has a Lyapunov irregular wandering rectangle $R=R_\sigma$ satisfying the 
conditions required in Theorem \ref{thm_A}.

\subsection{Wandering rectangles}\label{S_O_LIS}

For any $0\leq k\leq \gamma^m$, we set 
\begin{align*}
R(k,m)&=\left\{\boldsymbol{x}\in \mathbb{R}^2\,\biggl|\ 
\begin{matrix}
B(k,m)<y<B(k,m)+H(k,m),\\ L(k,m)<x<L(k,m)+W(k,m)
\end{matrix}\ 
\right\},\\
R(k,m)'&=\bigl\{\,\boldsymbol{x}\in R(k,m)\,|\, F^n({\boldsymbol{x}})\in D'\ (n=0,1,\dots)\,\bigr\},
\end{align*}
where $\boldsymbol{x}=(x,y)$ and $D'$ is the disjoint union of the eight regions $D_{0,+},\cdots,D_{3,-}$.
By using an argument similar to that in \cite[Section 2]{tj}, 
one can take $M$ so large that $R(k,m)$ is a wandering domain of $F$ if $m>M$.
By \eqref{eqn_LebU}, $\mathrm{Leb}(R(0,m)')=\mathrm{Leb}(R(0,m))>0$.
By \eqref{eqn_etaH}, $\eta(x)$ is constant on the interval $[L(k,p),L(k,p)+W(k,p)]$ for $p=m,m+1,\dots$ and $k=0,1,\dots,\gamma^p-1$.
Since 
$R(k,p)'\subset [L(k,p),L(k,p)+W(k,p)]\times (-1,1)$,  
it follows from \eqref{eqn_DF} that    
\begin{equation}\label{eqn_DF2}
DF({\boldsymbol{x}}){\boldsymbol{v}}=\alpha\lambda_1 {\boldsymbol{e}}_1+\beta\lambda_2{\boldsymbol{e}}_2
\end{equation}
for any ${\boldsymbol{x}}\in R(k,p)'$ with $p,k$ as above and any ${\boldsymbol{v}}=\alpha{\boldsymbol{e}}_1+\beta{\boldsymbol{e}}_2\in T_{{\boldsymbol{x}}}D$.
Set 
$$\xi_0=\frac{\gamma}{\gamma+1}\log\lambda_1+\frac1{\gamma+1}\log\lambda_2,\quad  
\xi_1=\frac{1}{\gamma+1}\log\lambda_1+\frac{\gamma}{\gamma+1}\log\lambda_2.$$
Since $\lambda_1>\lambda_2>1$ and $\gamma>1$, 
it follows that 
$$\xi_1<\log\sqrt{\lambda_1\lambda_2}<\xi_0.$$

Recall the definition \eqref{eqn_Af} such that $\mathcal{A}_F({\boldsymbol{x}},{\boldsymbol{v}})$ is the sequence defined as  
$$\mathcal{A}_F({\boldsymbol{x}},{\boldsymbol{v}})=\left\{ \dfrac1{n}\log\|DF^n({\boldsymbol{x}}){\boldsymbol{v}}\|\ \Big|\ n=1,2,\dots\right\}$$
for any ${\boldsymbol{x}}\in R(0,m)'$ and ${\boldsymbol{v}}\in T_{{\boldsymbol{x}}}D\setminus \{\mathbf{0}\}$.

\subsection{Proof of Theorem \ref{thm_A}}

The following theorem implies Theorem \ref{thm_A}, where
$R(0,m)$ corresponds to the wandering rectangle $R_\sigma$ in Theorem \ref{thm_A}.

\begin{theorem}\label{thm_Tj_map}
For any  ${\boldsymbol{x}}\in R(0,m)'$, suppose that ${\boldsymbol{v}}=\alpha{\boldsymbol{e}}_1+\beta{\boldsymbol{e}}_2$ is any non-zero element of $T_{{\boldsymbol{x}}}D$.
Then 
\begin{enumerate}[\rm ({A}1)]
\item
$\omega(\mathcal{A}_F({\boldsymbol{x}},{\boldsymbol{v}}))=\left[\log\sqrt{\lambda_1\lambda_2},\xi_0\right]$ if $\alpha\beta\neq 0$,\\[-6pt]
\item
$\omega(\mathcal{A}_F({\boldsymbol{x}},{\boldsymbol{v}}))=\left[\xi_1,\xi_0\right]$ if $\alpha\beta=0$.
\end{enumerate}
\end{theorem}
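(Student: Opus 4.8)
The plan is to compute the derivative cocycle $DF^n({\boldsymbol{x}})$ explicitly along the forward orbit of a point ${\boldsymbol{x}}\in R(0,m)'$ and then read off the growth of its norm. By the construction of the rectangles in Section \ref{S_NCP} (and the wandering‑domain argument of \cite[Section 2]{tj}), this orbit visits them in the order $R(0,m),R(1,m),\dots,R(\gamma^m-1,m),R(0,m+1),R(1,m+1),\dots$; writing $n_p=(\gamma^p-\gamma^m)/(\gamma-1)$ one has $F^{n_p+k}({\boldsymbol{x}})\in R(k,p)'$ for every $p\geq m$ and $0\leq k\leq\gamma^p-1$. For $0\leq k\leq\gamma^p-2$ the rectangle $R(k,p)$ lies in $D_{0,+}\cup D_{0,-}$, where $\eta$ is locally constant, so \eqref{eqn_DF2} gives that $DF(F^{n_p+k}({\boldsymbol{x}}))$ is the diagonal map ${\boldsymbol{e}}_1\mapsto\lambda_1{\boldsymbol{e}}_1$, ${\boldsymbol{e}}_2\mapsto\lambda_2{\boldsymbol{e}}_2$; for $k=\gamma^p-1$ the rectangle lies in $D_{1,+}$, and \eqref{eqn_DF} gives the axis‑swapping derivative ${\boldsymbol{e}}_1\mapsto\lambda_1{\boldsymbol{e}}_2$, ${\boldsymbol{e}}_2\mapsto\lambda_2{\boldsymbol{e}}_1$. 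Hence, if $DF^{n_p}({\boldsymbol{x}}){\boldsymbol{v}}=\alpha_p{\boldsymbol{e}}_1+\beta_p{\boldsymbol{e}}_2$ with $(\alpha_m,\beta_m)=(\alpha,\beta)$, then
\[
DF^{n_p+j}({\boldsymbol{x}}){\boldsymbol{v}}=\lambda_1^{j}\alpha_p{\boldsymbol{e}}_1+\lambda_2^{j}\beta_p{\boldsymbol{e}}_2\quad(0\leq j\leq\gamma^p-1),\qquad (\alpha_{p+1},\beta_{p+1})=\bigl(\lambda_2^{\gamma^p}\beta_p,\ \lambda_1^{\gamma^p}\alpha_p\bigr).
\]

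Next I would record a few elementary facts. Writing $DF^n({\boldsymbol{x}}){\boldsymbol{v}}=\alpha_n{\boldsymbol{e}}_1+\beta_n{\boldsymbol{e}}_2$, we have $\log\|DF^n({\boldsymbol{x}}){\boldsymbol{v}}\|=\max\{\log|\alpha_n|,\log|\beta_n|\}+O(1)$ uniformly in $n$, and $\log\|DF^{n+1}({\boldsymbol{x}}){\boldsymbol{v}}\|-\log\|DF^{n}({\boldsymbol{x}}){\boldsymbol{v}}\|\in[\varepsilon_2,\varepsilon_1]$; moreover $|\alpha_n\beta_n|=|\alpha\beta|\,(\lambda_1\lambda_2)^n$ for all $n$, so when $\alpha\beta\neq0$ this forces $\tfrac1n\log\|DF^n({\boldsymbol{x}}){\boldsymbol{v}}\|\geq\log\sqrt{\lambda_1\lambda_2}+O(1/n)$ and hence $\inf\omega(\mathcal{A}_F({\boldsymbol{x}},{\boldsymbol{v}}))\geq\log\sqrt{\lambda_1\lambda_2}$. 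Iterating the recursion for $(\alpha_p,\beta_p)$ (equivalently, using the geometric‑type sums that define $B,L,H,W$) one obtains, when $\alpha\beta\neq0$, that $\tfrac1{n_p}\log|\beta_p|\to\xi_0$ and $\tfrac1{n_p}\log|\alpha_p|\to\xi_1$ as $p\to\infty$, together with $|\beta_p|>|\alpha_p|$ for all large $p$ (since $\log(|\beta_p|/|\alpha_p|)=\gamma^{p-1}(\varepsilon_1-\varepsilon_2)-\log(|\beta_{p-1}|/|\alpha_{p-1}|)$, whence $\log(|\beta_p|/|\alpha_p|)\to+\infty$); when $\alpha\beta=0$, exactly one of $\alpha_p,\beta_p$ vanishes, alternately with the parity of $p-m$, and the logarithm of the non‑vanishing coordinate divided by $n_p$ tends to $\xi_0$ or to $\xi_1$ according to that parity.

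The core step is then to describe, for each large $p$, the finite sequence $g_p(j):=\tfrac1{n_p+j}\log\|DF^{n_p+j}({\boldsymbol{x}}){\boldsymbol{v}}\|$, $0\leq j\leq\gamma^p-1$. By the above, $g_p(j)=\bigl(\max\{\log|\alpha_p|+j\varepsilon_1,\ \log|\beta_p|+j\varepsilon_2\}+O(1)\bigr)/(n_p+j)$; the inner maximum switches sides at $j_p^{\ast}=(\log|\beta_p|-\log|\alpha_p|)/(\varepsilon_1-\varepsilon_2)$, and on each of $\{0,\dots,j_p^{\ast}\}$ and $\{j_p^{\ast},\dots,\gamma^p-1\}$ the leading term of $g_p$ is a ratio of two affine functions of $j$, hence monotone there. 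Using $n_p\sim\gamma^p/(\gamma-1)$ with the limits of $\tfrac1{n_p}\log|\alpha_p|$ and $\tfrac1{n_p}\log|\beta_p|$, a direct computation gives $g_p(0)\to\xi_0$ and $g_p(\gamma^p-1)\to\xi_0$ in both cases (A1), (A2). In case (A1) one finds in addition $j_p^{\ast}\sim\tfrac{\gamma-1}{\gamma+1}n_p\in(0,\gamma^p-1)$ and $g_p(j_p^{\ast})\to\log\sqrt{\lambda_1\lambda_2}$, so $g_p$ first decreases from $\approx\xi_0$ to $\approx\log\sqrt{\lambda_1\lambda_2}$ and then increases back to $\approx\xi_0$; in case (A2) one has $j_p^{\ast}\notin(0,\gamma^p-1)$ (one coordinate being zero), so $g_p$ is monotone across the whole $p$‑th cycle, running between $\xi_0$ and $\xi_1$. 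Since consecutive terms of $g_p$ differ by $O(1/n_p)\to0$, the set $\{g_p(j):0\leq j\leq\gamma^p-1\}$ is $o(1)$‑dense in, and contained in an $o(1)$‑neighbourhood of, $\bigl[\log\sqrt{\lambda_1\lambda_2},\xi_0\bigr]$ in case (A1) and $[\xi_1,\xi_0]$ in case (A2); letting $p\to\infty$ then yields exactly the claimed equalities.

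I expect the main obstacle to be the quantitative bookkeeping in the last two paragraphs: one must control the $O(1)$ term (from replacing $\|\cdot\|$ by the larger coordinate) and the $O(1/n)$ increments uniformly over all cycles, locate $j_p^{\ast}$ and evaluate $g_p(j_p^{\ast})$ precisely enough to obtain the endpoint $\log\sqrt{\lambda_1\lambda_2}$ exactly rather than some nearby constant, and rule out subsequential limits outside the stated closed interval — so that $\omega(\mathcal{A}_F({\boldsymbol{x}},{\boldsymbol{v}}))$ is identified as the interval itself, not merely up to two‑sided inclusion. A subsidiary point needing care is the claim $F^{n_p+k}({\boldsymbol{x}})\in R(k,p)'$ and the identification of which region $R(k,p)$ occupies at each step, which rests on the size estimates for $B,L,H,W$ recorded in Section \ref{S_NCP} and the argument of \cite[Section 2]{tj}; note finally that, since $F$ is affine on each piece, the orbit of every point of $R(0,m)'$ meets the same rectangles in the same order, so the resulting $\omega$‑limit set is independent of the choice of ${\boldsymbol{x}}\in R(0,m)'$, as the statement requires.
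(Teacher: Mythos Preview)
Your proposal is correct, and the core idea --- compute $DF^n({\boldsymbol{x}}){\boldsymbol{v}}$ explicitly along the wandering orbit and analyse the resulting averages as ratios of affine functions of the time variable, exploiting monotonicity and $O(1/n)$ step size --- is the same as the paper's. The organisation differs: the paper groups iterates two cycles at a time into subsequences $\mathcal{A}_F^{(1)}$ (over $[\gamma^{m+2n-1},\gamma^{m+2n}]$) and $\mathcal{A}_F^{(2)}$ (over $[\gamma^{m+2n},\gamma^{m+2n+1}]$), writes down closed-form power sums for the components of $DF^n{\boldsymbol{v}}$, introduces explicit rational functions $\varphi_n^{(1)},\varphi_n^{(2)}$ and applies the intermediate and mean value theorems, then reduces $\mathcal{A}_F^{(2)}$ to $\mathcal{A}_F^{(1)}$ by the shift ${\boldsymbol{x}}\mapsto F^{\gamma^m}({\boldsymbol{x}})$; you instead track one cycle at a time via the recursion $(\alpha_{p+1},\beta_{p+1})=(\lambda_2^{\gamma^p}\beta_p,\lambda_1^{\gamma^p}\alpha_p)$ and argue asymptotically. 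Your observation that $|\alpha_n\beta_n|=|\alpha\beta|\,(\lambda_1\lambda_2)^n$, hence $\|DF^n({\boldsymbol{x}}){\boldsymbol{v}}\|^2\geq 2|\alpha\beta|(\lambda_1\lambda_2)^n$, is a cleaner route to the lower bound $\log\sqrt{\lambda_1\lambda_2}$ in~(A1) than the paper's uniform estimate on the auxiliary constants $c_{n,k}$, but otherwise the two arguments are the same computation in different bookkeeping.
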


First of all, we consider the subsequences $\mathcal{A}_F^{(1)}({\boldsymbol{x}},{\boldsymbol{v}})$, $\mathcal{A}_F^{(2)}({\boldsymbol{x}},{\boldsymbol{v}})$ 
of $\mathcal{A}_F({\boldsymbol{v}})$ defined as
\begin{align*}
\mathcal{A}_F^{(1)}({\boldsymbol{x}},{\boldsymbol{v}})&=\biggl\{ \dfrac1{a}\log\|DF^a({\boldsymbol{x}}){\boldsymbol{v}}\|\ \Big|\ a\in \biggl(\,\bigcup_{n=1}^\infty
[\gamma^{m+2n-1},\gamma^{m+2n}]\,\biggr)\cap \mathbb{N}\biggr\},\\
\mathcal{A}_F^{(2)}({\boldsymbol{x}},{\boldsymbol{v}})&=\biggl\{ \dfrac1{b}\log\|DF^b({\boldsymbol{x}}){\boldsymbol{v}}\|\ \Big|\ b\in \biggl(\,\bigcup_{n=1}^\infty
[\gamma^{m+2n},\gamma^{m+2n+1}]\,\biggr)\cap \mathbb{N}\biggr\}.
\end{align*}
The assertion (A1) of Theorem \ref{thm_Tj_map} is a corollary to the following two lemmas.

\begin{lemma}\label{l_ab_neq0}
$\omega(\mathcal{A}_F^{(1)}({\boldsymbol{x}},{\boldsymbol{v}}))=\bigl[\log\sqrt{\lambda_1\lambda_2},\xi_0\bigr]$ if $\alpha\beta\neq 0$.
\end{lemma}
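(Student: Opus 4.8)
The plan is to track the precise growth of $\|DF^a(\boldsymbol{x})\boldsymbol{v}\|$ along the orbit of a point $\boldsymbol{x}\in R(0,m)'$ and extract the $\omega$-limit set of the averaged logarithms over the index blocks $[\gamma^{m+2n-1},\gamma^{m+2n}]$. The starting observation is that the forward iterates of $R(0,m)'$ pass successively through the rectangles $R(k,p)'$: under $F$, the coordinates of $R(k,p)$ are sent to those of $R(k+1,p)$ for $0\le k\le\gamma^p-2$, and the ``corner'' relations $B(0,p+1)=\lambda_1(L(\gamma^p-1,p)-\exp(-\varepsilon_1))$, $L(0,p+1)=\lambda_2 B(\gamma^p-1,p)$ (together with the analogous $W,H$ relations) show that $F(R(\gamma^p-1,p)')\subset R(0,p+1)'$. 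So if $\boldsymbol{x}\in R(0,m)'$ then $F^n(\boldsymbol{x})\in R(k,p)'$ where $n$ and $(k,p)$ are linked by $n=\sum_{q=m}^{p-1}\gamma^q + k$. On each such rectangle, \eqref{eqn_DF2} tells us $DF$ acts diagonally as $\mathrm{diag}(\lambda_1,\lambda_2)$; hence writing $\boldsymbol{v}=\alpha\boldsymbol{e}_1+\beta\boldsymbol{e}_2$ and letting $n_1(a)$, $n_2(a)$ denote the number of the first $a$ iterates that lie in a ``horizontal'' piece resp. a ``vertical'' piece of type $D_{1,+}$ (where the roles of the two axes are swapped by \eqref{eqn_DF}), we get
\begin{equation*}
DF^a(\boldsymbol{x})\boldsymbol{v}=\pm\,\alpha\,\lambda_1^{n_1(a)}\lambda_2^{n_2(a)}\,\boldsymbol{e}_{i(a)}\pm\,\beta\,\lambda_1^{n_2(a)}\lambda_2^{n_1(a)}\,\boldsymbol{e}_{j(a)},
\end{equation*}
with $n_1(a)+n_2(a)=a$ and $\{i(a),j(a)\}=\{1,2\}$; the key point is that $n_1(a)$ counts how many of the first $a$ steps correspond to a "width-expanding" ($\lambda_1$) step in the $x$-direction. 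Because the orbit spends $\gamma^p$ consecutive steps at level $p$, with the axis-swap happening exactly once per level (when the orbit moves from $R(\gamma^p-1,p)'$ to $R(0,p+1)'$ via a $D_{1,+}$ piece), the quantity $n_1(a)/a$ is a completely explicit, piecewise-monotone function of $a$.

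\textbf{Key steps.}
First I would make the combinatorics of $n_1(a)$ precise: when $a=\gamma^m+\cdots+\gamma^{p-1}$ (the end of level $p-1$, start of level $p$), an elementary count of how many whole levels have been traversed gives $n_1(a)=\sum_{q=m}^{p-1}\gamma^q$ if $p-m$ is even (say), and a complementary sum if $p-m$ is odd, because each level flips which coordinate is the $\lambda_1$-expanded one. Since $\sum_{q=m}^{p-1}\gamma^q=\frac{\gamma^p-\gamma^m}{\gamma-1}$ and $a\approx\gamma^{p-1}\cdot\frac{\gamma}{\gamma-1}$, one finds $n_1(a)/a\to \frac{1}{\gamma}$ or $\to\frac{\gamma-1}{\gamma}\cdot\frac{1}{\gamma-1}$-type constants at these special times. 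Second, within a single level $p$ — i.e. for $a$ ranging over $[\sum_{q=m}^{p-1}\gamma^q,\sum_{q=m}^{p}\gamma^q]$ — the extra steps all have the same orientation, so $\frac1a\log\|DF^a(\boldsymbol{x})\boldsymbol{v}\|$ moves monotonically between the two endpoint values, and as $p\to\infty$ these endpoint values converge. A short computation (using $\log\lambda_i=\varepsilon_i$ and the identity $\frac{\gamma^{p}}{\gamma^{p}+\gamma^{p-1}+\cdots}=\frac{\gamma-1}{\gamma}\cdot\frac{\gamma^p}{\gamma^p-\gamma^m}$, which $\to\frac{\gamma-1}{\gamma}$) shows that, restricted to the blocks $[\gamma^{m+2n-1},\gamma^{m+2n}]$, the ratio $n_1(a)/a$ sweeps out exactly the interval $\bigl[\frac{1}{\gamma+1},\frac{\gamma}{\gamma+1}\bigr]$, hence $\frac1a\log\|DF^a\boldsymbol{v}\|$ sweeps out (by continuity, since it is a continuous monotone function of the ratio on each sub-block and the set of ratios is an interval in the limit) the interval with endpoints $\frac{1}{\gamma+1}\varepsilon_1+\frac{\gamma}{\gamma+1}\varepsilon_2=\xi_1$ and $\frac{\gamma}{\gamma+1}\varepsilon_1+\frac{1}{\gamma+1}\varepsilon_2=\xi_0$... but here I would use the crucial extra fact that when $\alpha\beta\neq0$ the logarithm of the norm is dominated, up to a bounded additive error, by $\max$ of the two terms, namely by $(n_1(a)\varepsilon_1+n_2(a)\varepsilon_2)$ when that exponent wins and by $(n_2(a)\varepsilon_1+n_1(a)\varepsilon_2)$ otherwise — so the relevant running average is $\frac1a\max\{n_1\varepsilon_1+n_2\varepsilon_2,\;n_2\varepsilon_1+n_1\varepsilon_2\}$, whose infimum over the limiting range of ratios is attained at $n_1/a=n_2/a=\tfrac12$, giving $\log\sqrt{\lambda_1\lambda_2}=\tfrac12(\varepsilon_1+\varepsilon_2)$, and whose supremum is $\xi_0$. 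That is precisely the claimed interval $[\log\sqrt{\lambda_1\lambda_2},\xi_0]$.

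\textbf{Main obstacle.}
The genuinely delicate point is showing that $n_1(a)/a$ actually achieves the value $\tfrac12$ (or comes within $o(1)$ of it) \emph{for some $a$ inside each block $[\gamma^{m+2n-1},\gamma^{m+2n}]$} — this is where the choice to index blocks by $[\gamma^{m+2n-1},\gamma^{m+2n}]$ rather than $[\gamma^{m+n},\gamma^{m+n+1}]$ matters, and where one must verify that as $a$ runs from $\gamma^{m+2n-1}$ to $\gamma^{m+2n}$ the running count $n_1(a)$ transitions from being dominated by $\lambda_1$-steps to being balanced; the geometric-series cancellation $\gamma^{m+2n}-\gamma^{m+2n-1}-\cdots$ has to be done carefully, keeping track of the starting phase (which coordinate is $\lambda_1$-expanded at the block's left endpoint) and checking it is the same for all $n$ so that the limiting interval is independent of $n$. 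I also need the uniformity in $\boldsymbol{x}\in R(0,m)'$: since $DF$ is exactly diagonal on every $R(k,p)'$, there is in fact no $\boldsymbol{x}$-dependence at all in $\|DF^a(\boldsymbol{x})\boldsymbol{v}\|$ beyond the fixed $(\alpha,\beta)$, so uniformity is automatic — I would state this explicitly. Finally, the two inclusions: "$\subseteq$" follows because $\frac1a\log\|DF^a\boldsymbol{v}\|$ stays within $[\log\sqrt{\lambda_1\lambda_2}-o(1),\xi_0+o(1)]$ for all large $a$ in the $\mathcal{A}^{(1)}$ index set (bounding $n_1/a$ away from $0$ and $1$ using that within block $n$ one has visited $\Theta(\gamma^{2n})$ steps of each flavor), and "$\supseteq$" follows from the intermediate-value / monotonicity argument above, realizing each value as a limit along a suitable subsequence of $a$'s.
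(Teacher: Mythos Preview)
Your approach is essentially the one the paper takes. The paper also computes $DF^a(\boldsymbol{x})\boldsymbol{v}$ explicitly as $\alpha\lambda_1^{A+k}\lambda_2^{B}\boldsymbol{e}_1+\beta\lambda_1^{B}\lambda_2^{A+k}\boldsymbol{e}_2$ (with $A=\gamma^m\sum_{j=0}^{n-1}\gamma^{2j}$, $B=\gamma A$, $k$ running through level $m+2n$), factors out the dominant term into a bounded quantity $c_{n,k}$ using $\alpha\beta\neq0$ (this is exactly your ``max of the two exponents up to $O(1)$'' step), and then introduces a piecewise-monotone continuous function $\varphi_n(t)$ playing the role of your running average, applying the intermediate value theorem to it. Your $n_1(a)/a$ is just a reparametrization of the paper's $\varphi_n$.

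One point to tighten: you assert that within the block $[\gamma^{m+2n-1},\gamma^{m+2n}]$ the ratio $n_1(a)/a$ sweeps out exactly $[\tfrac{1}{\gamma+1},\tfrac{\gamma}{\gamma+1}]$, but the $\mathcal{A}_F^{(1)}$ block does not coincide with level $m+2n$ (it begins inside level $m+2n-1$ and ends partway through level $m+2n$), so the actual range of $n_1/a$ over the block is slightly different and $\gamma$-dependent. The paper avoids this by parametrizing directly by $k\in[0,\gamma^{m+2n}]$ rather than by the block endpoints. What you need, and what does hold, is that the range of $n_1/a$ over each block is contained in $[\tfrac{1}{\gamma+1},\tfrac{\gamma}{\gamma+1}]$, contains $\tfrac12$, and hits at least one of the endpoints $\tfrac{1}{\gamma+1}$ or $\tfrac{\gamma}{\gamma+1}$ asymptotically; this is precisely the bookkeeping you flag as the ``main obstacle,'' and once done your argument goes through.
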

\begin{proof}
For any positive integer $n$, 
we set 
$$
u_{m,n}=\gamma^m(-1+\gamma-\gamma^2+\cdots-\gamma^{2n-2}+\gamma^{2n-1})=
\gamma^m(\gamma-1)\sum_{j=0}^n \gamma^{2j}.
$$
By \eqref{eqn_DF} and \eqref{eqn_DF2}, 
\begin{align*}
DF^{\gamma^m\sum_{i=0}^{2n-1}\gamma^i+k}({\boldsymbol{x}}){\boldsymbol{v}}
&=\alpha\lambda_1^{\gamma^m\sum_{j=0}^{n-1}\gamma^{2j}+k}\lambda_2^{\gamma^{m+1}\sum_{j=0}^{n-1}\gamma^{2j}}
{\boldsymbol{e}}_1\\
&\hspace{60pt}
+\beta\lambda_1^{\gamma^{m+1}\sum_{j=0}^{n-1}\gamma^{2j}}\lambda_2^{\gamma^m\sum_{j=0}^{n-1}\gamma^{2j}+k}
{\boldsymbol{e}}_2
\end{align*}
for any $k=0,1,\dots,\gamma^{m+2n}-1$.
The equation in the case of $k=\gamma^{m+2n}$ is obtained from the above equation by swapping ${\boldsymbol{e}}_1$ for ${\boldsymbol{e}}_2$.
Thus, for any $k=0,1,\dots,\gamma^{m+2n}$, 
\begin{equation}\label{eqn_log_DF0}
\begin{split}
\log \|DF^{\gamma^m\sum_{i=0}^{2n-1}\gamma^i+k}({\boldsymbol{x}}){\boldsymbol{v}}\|
=\frac12\log\Bigl(&\alpha^2\lambda_1^{2\gamma^m\sum_{j=0}^{n-1}\gamma^{2j}+2k}\lambda_2^{2\gamma^{m+1}\sum_{j=0}^{n-1}
\gamma^{2j}}\\
&+\beta^2\lambda_1^{2\gamma^{m+1}\sum_{j=0}^{n-1}\gamma^{2j}}\lambda_2^{2\gamma^m\sum_{j=0}^{n-1}\gamma^{2j}+2k}\Bigr).
\end{split}
\end{equation}
Let $c_{n,k}$ be the number defined as 
\begin{equation}\label{eqn_c_nk}
c_{n,k}=\begin{cases}
\alpha^2(\lambda_1^{-1}\lambda_2)^{2(u_{m,n}-k)}+\beta^2& \text{for $k=0,1,\dots,u_{n,m}$},\\
\alpha^2+\beta^2(\lambda_1^{-1}\lambda_2)^{2(k-u_{m,n})}& \text{for $k=u_{n,m},u_{n,m}+1,\dots,\gamma^{m+2n}$}.
\end{cases}
\end{equation}
By \eqref{eqn_log_DF0}, for any $k=0,1,\dots,u_{n,m}$, 
\begin{equation}\label{eqn_log_DF1}
\begin{split}
\log \|D&F^{\gamma^m\sum_{i=0}^{2n-1}\gamma^i+k}({\boldsymbol{x}}){\boldsymbol{v}}\|=
\frac12\log\Big(\lambda_1^{2\gamma^{m+1}\sum_{j=0}^{n-1}\gamma^{2j}}\lambda_2^{2\gamma^m\sum_{j=0}^{n-1}\gamma^{2j}+2k}
c_{n,k}\Bigr)\\
&=\Bigl(\gamma^{m+1}\sum_{j=0}^{n-1}\gamma^{2j}\Bigr)\log\lambda_1+
\Bigl(\gamma^{m}\sum\limits_{j=0}^{n-1}\gamma^{2j}+k\Bigr)\log \lambda_2
+\frac12\log c_{n,k}.
\end{split}
\end{equation}
Similarly, for any $k=u_{n,m},u_{n,m}+1,\dots,\gamma^{m+2n}$, 
\begin{equation}\label{eqn_log_DF2}
\begin{split}
\log \|D&F^{\gamma^m\sum_{i=0}^{2n-1}\gamma^i+k}({\boldsymbol{x}}){\boldsymbol{v}}\|=
\frac12\log\Big(\lambda_1^{2\gamma^{m+1}\sum_{j=0}^{n-1}\gamma^{2j}+2k}\lambda_2^{2\gamma^m\sum_{j=0}^{n-1}\gamma^{2j}}
c_{n,k}\Bigr)\\
&=\Bigl(\gamma^{m+1}\sum_{j=0}^{n-1}\gamma^{2j}+k\Bigr)\log\lambda_1+
\Bigl(\gamma^{m}\sum\limits_{j=0}^{n-1}\gamma^{2j}\Bigr)\log \lambda_2
+\frac12\log c_{n,k}.
\end{split}
\end{equation}
Since $0<\lambda_1^{-1}\lambda_2<1$, we have by \eqref{eqn_c_nk} 
$$\min\{\log \alpha^2,\log \beta^2\}\leq \log c_{n,k}\leq \log (\alpha^2+\beta^2).$$
Hence, for any $k=0,1,\dots,\gamma^{m+2n}$, 
\begin{equation}\label{eqn_alpha_beta2}
|\log c_{n,k}|\leq \max\bigl\{\log (\alpha^2+\beta^2),\, \log \alpha^{-2},\, \log \beta^{-2}\bigr\}.
\end{equation}
We define the function $\varphi_n:[0,\gamma^{m+2n}]\longrightarrow \mathbb{R}$ by
$$\varphi_n(t)=\begin{cases}
\varphi_n^{(1)}(t)&\text{for $0\leq t\leq u_{m,n}$},\\[4pt]
\varphi_n^{(2)}(t)&\text{for $u_{m,n}< t\leq \gamma^{m+2n}$},
\end{cases}$$ 
where 
\begin{subequations}
\begin{align}
\varphi_n^{(1)}(t)&=\dfrac{
\Bigl(\gamma^{m+1}\sum_{j=0}^{n-1}\gamma^{2j}\Bigr)\log\lambda_1+
\Bigl(\gamma^{m}\sum_{j=0}^{n-1}\gamma^{2j}+t\Bigr)\log \lambda_2}
{\gamma^m\sum_{i=0}^{2n-1}\gamma^i+t},\label{eqn_DF_3a}\\
\varphi_n^{(2)}(t)&=\dfrac{
\Bigl(\gamma^{m}\sum_{j=0}^{n-1}\gamma^{2j}+t\Bigr)\log\lambda_1+
\Bigl(\gamma^{m+1}\sum_{j=0}^{n-1}\gamma^{2j}\Bigr)\log \lambda_2}
{\gamma^m\sum_{i=0}^{2n-1}\gamma^i+t}\label{eqn_DF_3b}.
\end{align}
\end{subequations}
Since $\lim\limits_{t\to u_{m,n}+0}\varphi_{n}^{(2)}(t)=\log\sqrt{\lambda_1\lambda_2}=\varphi_n^{(1)}(u_{m,n})$, 
$\varphi_n(t)$ is a continuous function.
From the definition, we know that $\varphi_n(t)$ is monotone decreasing on $0\leq t\leq u_{m,n}$ 
and monotone increasing on $u_{m,n}\leq  t\leq \gamma^{m+2n}$ and 
$\varphi_{n}(0)=\varphi_n(\gamma^{m+2n}-\gamma^m)=\xi_0$ holds.
Note that $\varphi_n(t)$ is differentiable at any $t\neq u_{m,n}$ and satisfies 
$$
\sup\biggl\{\,\Bigl|\frac{d\varphi_n}{dt}(t)\Bigr|\ \bigg|\ t\in [0,\gamma^{m+2n}]\setminus \{u_{m,n}\}\,\biggr\}=O(\gamma^{-2n}).
$$
By the mean value theorem, for any $\xi\in \bigl[\log\sqrt{\lambda_1\lambda_2},\xi_0\bigr]$, 
\begin{equation}\label{eqn_vp-xi}
\varphi_n(\gamma^{m+2n})-\xi_0=O(\gamma^{-2n})(\gamma^{m+2n}-(\gamma^{m+2n}-\gamma^m))=O(\gamma^{-2n}).
\end{equation}
By the intermediate value theorem, 
for any $\xi\in \bigl[\log\sqrt{\lambda_1\lambda_2},\xi_0\bigr]$, there exits a real number 
$t_{n}$ $(0\leq t_{n}\leq \gamma^{m+2n}-\gamma^m)$ with $\varphi_{n}(t_{n})=\xi$.
Again by using the mean value theorem, we have
$$
|\,\varphi_{n}(\lfloor t_{n} \rfloor)-\xi\,|=O(\gamma^{-2n}),
$$
where $\lfloor t_{n} \rfloor$ is the greatest integer not exceeding $t_n$.
This implies that $\lim\limits_{n\to\infty}\varphi_{n}(\lfloor t_{n} \rfloor)=\xi$.
Here we set $q_{n}=\gamma^m\sum_{i=0}^{2n-1}\gamma^i+\lfloor t_{n} \rfloor$.
By \eqref{eqn_log_DF1}, \eqref{eqn_log_DF2} and \eqref{eqn_alpha_beta2}, 
$$
\lim_{n\to\infty}\frac{\log\|DF^{q_{n}}({\boldsymbol{x}}){\boldsymbol{v}}\|}{q_{n}}=\lim_{n\to\infty}\varphi_{n}(\lfloor 
t_{n} \rfloor)
+\lim_{n\to\infty}\frac{\log(c_{{n},\lfloor t_{n} \rfloor})}{2q_{n}}=\xi. 
$$
It follows that 
$$\omega(\mathcal{A}_F^{(1)}({\boldsymbol{x}},{\boldsymbol{v}}))\supset \bigl[\log\sqrt{\lambda_1\lambda_2},\xi_0\bigr].$$

On the other hand, by \eqref{eqn_vp-xi}, $\varphi_n([0,\gamma^{m+2n}])=\bigl[\log\sqrt{\lambda_1\lambda_2},\xi_0+O(\gamma^{-2n})\bigr]$.
This fact shows $\omega(\mathcal{A}_F^{(1)}({\boldsymbol{x}},{\boldsymbol{v}}))\subset \bigl[\log\sqrt{\lambda_1\lambda_2},\xi_0\bigr]$ 
and hence $\omega(\mathcal{A}_F^{(1)}({\boldsymbol{x}},{\boldsymbol{v}}))= \bigl[\log\sqrt{\lambda_1\lambda_2},\xi_0\bigr]$, 
which completes the proof.
\end{proof}

\begin{lemma}\label{l_ab_neq1}
$\omega(\mathcal{A}_F^{(2)}({\boldsymbol{x}},{\boldsymbol{v}}))=\bigl[\log\sqrt{\lambda_1\lambda_2},\xi_0\bigr]$ if $\alpha\beta\neq 0$.
\end{lemma}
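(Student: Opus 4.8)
The plan is to run the proof of Lemma~\ref{l_ab_neq0} essentially verbatim on the complementary family of iterates: those during which the orbit of ${\boldsymbol x}\in R(0,m)'$ traverses the rectangles $R(0,m+2n+1),R(1,m+2n+1),\dots$ of an odd-offset generation. First I would establish the analogue of \eqref{eqn_log_DF0}. Starting from the vector reached at the end of the block treated in Lemma~\ref{l_ab_neq0}, exactly one additional passage through $D_{1,+}$ has taken place, i.e.\ one more interchange of the coordinates in the sense of the third line of \eqref{eqn_DF}; applying \eqref{eqn_DF2} on the subsequent rectangles $R(k,m+2n+1)'$ then gives, for $k=0,1,\dots,\gamma^{m+2n+1}$,
\begin{equation*}
DF^{\gamma^m\sum_{i=0}^{2n}\gamma^{i}+k}({\boldsymbol x}){\boldsymbol v}=\beta\,\lambda_1^{\,Q+k}\lambda_2^{\,P}{\boldsymbol e}_1+\alpha\,\lambda_1^{\,P}\lambda_2^{\,Q+k}{\boldsymbol e}_2,\qquad P=\gamma^m\sum_{j=0}^{n}\gamma^{2j},\quad Q=\gamma^{m+1}\sum_{j=0}^{n-1}\gamma^{2j}
\end{equation*}
(with ${\boldsymbol e}_1$ and ${\boldsymbol e}_2$ exchanged on the right-hand side when $k=\gamma^{m+2n+1}$), so that the roles of $\alpha$ and $\beta$ are interchanged compared with \eqref{eqn_log_DF0}. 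I would record the identities $P+Q=\gamma^m\sum_{i=0}^{2n}\gamma^{i}$, $\ Q+\gamma^{m+2n+1}=\gamma P$, and $\ P=\gamma Q+\gamma^{m}$.

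Next, mimicking \eqref{eqn_c_nk}, I would put $v_{m,n}=P-Q\ (\geq 0)$ and
\begin{equation*}
c_{n,k}=\begin{cases}\alpha^2+\beta^2(\lambda_1^{-1}\lambda_2)^{2(v_{m,n}-k)}&\text{for }0\le k\le v_{m,n},\\[2pt]\alpha^2(\lambda_1^{-1}\lambda_2)^{2(k-v_{m,n})}+\beta^2&\text{for }v_{m,n}\le k\le\gamma^{m+2n+1},\end{cases}
\end{equation*}
so that (using $\alpha\beta\ne0$) $|\log c_{n,k}|$ is bounded by the constant in \eqref{eqn_alpha_beta2}, and $\log\|DF^{\gamma^m\sum_{i=0}^{2n}\gamma^{i}+k}({\boldsymbol x}){\boldsymbol v}\|$ equals $P\log\lambda_1+(Q+k)\log\lambda_2+\tfrac12\log c_{n,k}$ for $k\le v_{m,n}$ and $(Q+k)\log\lambda_1+P\log\lambda_2+\tfrac12\log c_{n,k}$ for $k\ge v_{m,n}$. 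Dividing the two principal terms by the total number of iterates $(P+Q)+t$ defines a continuous $\varphi_n$ on $[0,\gamma^{m+2n+1}]$, and I would check, just as for $\varphi_n$ in Lemma~\ref{l_ab_neq0}, that it is strictly decreasing on $[0,v_{m,n}]$ and strictly increasing on $[v_{m,n},\gamma^{m+2n+1}]$, that $\varphi_n(v_{m,n})=\log\sqrt{\lambda_1\lambda_2}$ and $\varphi_n(\gamma^{m+2n+1})=\xi_0$ exactly (these follow from $Q+v_{m,n}=P$, $(P+Q)+v_{m,n}=2P$ and $Q+\gamma^{m+2n+1}=\gamma P$), that $\varphi_n(0)=\xi_0+O(\gamma^{-2n})$, and that $\sup_t|\varphi_n'(t)|=O(\gamma^{-2n})$; consequently $\varphi_n([0,\gamma^{m+2n+1}])=[\log\sqrt{\lambda_1\lambda_2},\ \xi_0+O(\gamma^{-2n})]$.

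With this in hand I would finish exactly as in Lemma~\ref{l_ab_neq0}: for each $\xi\in[\log\sqrt{\lambda_1\lambda_2},\xi_0]$ the intermediate value theorem gives $t_n$ with $\varphi_n(t_n)=\xi$, the derivative bound gives $\varphi_n(\lfloor t_n\rfloor)=\xi+O(\gamma^{-2n})$, and for $q_n=\gamma^m\sum_{i=0}^{2n}\gamma^{i}+\lfloor t_n\rfloor$ the bound on $|\log c_{n,k}|$ yields $q_n^{-1}\log\|DF^{q_n}({\boldsymbol x}){\boldsymbol v}\|\to\xi$, so that $\omega(\mathcal{A}_F^{(2)}({\boldsymbol x},{\boldsymbol v}))\supset[\log\sqrt{\lambda_1\lambda_2},\xi_0]$, while the reverse inclusion is the image computation of the previous step. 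The only place where this genuinely differs from Lemma~\ref{l_ab_neq0} — and the one point I expect to require care — is that here $P$ is not an exact multiple of $Q$ but only satisfies $P=\gamma Q+\gamma^{m}$, so $\varphi_n(0)$ equals $\xi_0+O(\gamma^{-2n})$ rather than exactly $\xi_0$; this discrepancy is, however, of the same order as the errors already absorbed by the mean value estimates in that proof, and so it is harmless. Apart from this bookkeeping of the one extra coordinate swap and the verification that the extremal values of $\varphi_n$ remain $\log\sqrt{\lambda_1\lambda_2}$ and (asymptotically) $\xi_0$, I do not anticipate any real obstacle.
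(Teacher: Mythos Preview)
Your proposal is correct but takes a genuinely different route from the paper. You redo the computation of Lemma~\ref{l_ab_neq0} from scratch for the odd-offset blocks, deriving the explicit formula for $DF^{\gamma^m\sum_{i=0}^{2n}\gamma^i+k}({\boldsymbol{x}}){\boldsymbol{v}}$, introducing the crossover point $v_{m,n}=P-Q$, and tracking the endpoints of $\varphi_n$ (your verifications that $\varphi_n(v_{m,n})=\log\sqrt{\lambda_1\lambda_2}$, $\varphi_n(\gamma^{m+2n+1})=\xi_0$, and $\varphi_n(0)=\xi_0+O(\gamma^{-2n})$ are all sound). The paper, by contrast, avoids any new computation: it sets ${\boldsymbol{x}}'=F^{\gamma^m}({\boldsymbol{x}})\in R(0,m+1)'$ and ${\boldsymbol{v}}'=DF^{\gamma^m}({\boldsymbol{x}}){\boldsymbol{v}}$, notes that $[\gamma^{m+2n},\gamma^{m+2n+1}]=[\gamma^{(m+1)+2n-1},\gamma^{(m+1)+2n}]$, and uses the chain rule together with the uniform bounds $\lambda_2^{\gamma^m}\leq\|DF^b({\boldsymbol{x}}'){\boldsymbol{v}}'\|/\|DF^b({\boldsymbol{x}}){\boldsymbol{v}}\|\leq\lambda_1^{\gamma^m}$ to conclude $\omega(\mathcal{A}_F^{(2)}({\boldsymbol{x}},{\boldsymbol{v}}))=\omega(\mathcal{A}_F^{(1)}({\boldsymbol{x}}',{\boldsymbol{v}}'))$, which is $[\log\sqrt{\lambda_1\lambda_2},\xi_0]$ by Lemma~\ref{l_ab_neq0} applied with $m$ replaced by $m+1$. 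The paper's reduction is considerably shorter and reuses Lemma~\ref{l_ab_neq0} as a black box; your direct approach is self-contained and makes the parallelism with the even case completely explicit, at the cost of repeating essentially the same analysis.
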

\begin{proof}
Let $b$ be any element of 
$[\gamma^{m+2n},\gamma^{m+2n+1}]\cap \mathbb{N}=[\gamma^{(m+1)+2n-1},\gamma^{(m+1)+2n}]\cap \mathbb{N}$.
If we set ${\boldsymbol{x}}'=F^{\gamma^m}({\boldsymbol{x}})$ and ${\boldsymbol{v}}'=\alpha'{\boldsymbol{e}}_1+\beta'{\boldsymbol{e}}_2=DF^{\gamma^m}({\boldsymbol{x}}){\boldsymbol{v}}$, 
then ${\boldsymbol{x}}'\in R(m+1,0)'$, ${\boldsymbol{v}}'\in T_{{\boldsymbol{x}}'}D$ and $\alpha'\beta'\neq 0$.
By the chain rule, 
$$DF^b({\boldsymbol{x}}'){\boldsymbol{v}}'=DF^{b+\gamma^m}({\boldsymbol{x}}){\boldsymbol{v}}=DF^{\gamma^m}(F^{b}({\boldsymbol{x}}))DF^b({\boldsymbol{x}}){\boldsymbol{v}}.$$
It follows from this fact together with \eqref{eqn_DF} and \eqref{eqn_DF2} that 
$$\lambda_2^{\gamma^m}\|DF^b({\boldsymbol{x}}){\boldsymbol{v}}\|
\leq \|DF^b({\boldsymbol{x}}'){\boldsymbol{v}}'\|
\leq \lambda_1^{\gamma^m}\|DF^b({\boldsymbol{x}}){\boldsymbol{v}}\|.$$
This shows that 
$$\frac{\log \|DF^b({\boldsymbol{x}}){\boldsymbol{v}}\|}{b}=\frac{\log \|DF^b({\boldsymbol{x}}'){\boldsymbol{v}}'\|}{b}+O(b^{-1}).$$
Hence, by Lemma \ref{l_ab_neq0},  
$\omega(\mathcal{A}_F^{(2)}({\boldsymbol{x}},{\boldsymbol{v}}))=\omega(\mathcal{A}_F^{(1)}({\boldsymbol{x}}',{\boldsymbol{v}}'))=\bigl[\log\sqrt{\lambda_1\lambda_2},\xi_0\bigr]$.
\end{proof}

By Lemmas \ref{l_ab_neq0} and \ref{l_ab_neq1}, we have 
$\omega(\mathcal{A}_F({\boldsymbol{x}},{\boldsymbol{v}}))=\left[\log \sqrt{\lambda_1\lambda_2},\xi_0\right]$ if 
$\alpha\beta\neq 0$, 
which shows the assertion (A1) of Theorem \ref{thm_Tj_map}.

\begin{proof}[Proof of the assertion (A2) of Theorem \ref{thm_Tj_map}.]
Now we consider the case of $\alpha\beta=0$.
Since ${\boldsymbol{v}}\neq \mathbf{0}$, one of $\alpha$ and $\beta$ is non-zero.
Thus it suffices to consider the case of either $(\alpha,\beta)=(0,1)$ or $(1,0)$.
By \eqref{eqn_log_DF0}, for any $k=0,1,\dots,\gamma^{m+2n}$, 
$$
\log \|DF^{\gamma^m\sum_{i=0}^{2n-1}\gamma^i+k}({\boldsymbol{x}}){\boldsymbol{v}}\|=
\begin{cases}
\dfrac12\log\Bigl(\lambda_1^{2\gamma^{m+1}\sum_{j=0}^{n-1}\gamma^{2j}}\lambda_2^{2\gamma^m\sum_{j=0}^{n-1}\gamma^{2j}+2k}\Bigr)\\
\hspace{120pt}\text{if $(\alpha,\beta)=(0,1)$},\\[10pt]
\dfrac12\log\Bigl(\lambda_1^{2\gamma^m\sum_{j=0}^{n-1}\gamma^{2j}+2k}\lambda_2^{2\gamma^{m+1}\sum_{j=0}^{n-1}\gamma^{2j}}\Bigr)\\
\hspace{120pt}\text{if $(\alpha,\beta)=(1,0)$}.
\end{cases}
$$
The domains of the functions $\varphi_n^{(1)}(t)$, $\varphi_n^{(2)}(t)$ given in \eqref{eqn_DF_3a} and 
\eqref{eqn_DF_3b} can be extended to $0\leq t\leq \gamma^{m+2n}$.
On the new domain, $\varphi_n^{(1)}(t)$ is monotone decreasing and $\varphi_n^{(2)}(t)$ 
is monotone increasing.
Furthermore  
$$\varphi_n^{(1)}(0)=\varphi_n^{(2)}(\gamma^{m+2n}-\gamma^m)=\xi_0\quad \text{and}\quad
\varphi_n^{(2)}(0)=\varphi_n^{(1)}(\gamma^{m+2n}-\gamma^m)=\xi_1$$
hold.
Thus, in either case of $(\alpha,\beta)=(0,1)$ or $(1,0)$, 
we have $\omega(\mathcal{A}_F^{(1)}({\boldsymbol{x}},{\boldsymbol{v}}))=[\xi_1,\xi_0]$ by using an argument similar to that in the proof of Lemma \ref{l_ab_neq0}.

In the case 
when $b\in [\gamma^{m+2n},\gamma^{m+2n+1}]\cap \mathbb{N}=[\gamma^{(m+1)+2n-1},\gamma^{(m+1)+2n}]\cap \mathbb{N}$, 
we set ${\boldsymbol{x}}'=F^{\gamma^m}({\boldsymbol{x}})$ and ${\boldsymbol{v}}'=DF^{\gamma^m}({\boldsymbol{x}}){\boldsymbol{v}}$.
Then, as in the proof of Lemma \ref{l_ab_neq1}, one can prove that 
$\omega(\mathcal{A}_F^{(2)}({\boldsymbol{x}},{\boldsymbol{v}}))=\omega(\mathcal{A}_F^{(1)}({\boldsymbol{x}}',{\boldsymbol{v}}'))=[\xi_1,\xi_0]$.
So we have $\omega(\mathcal{A}_F({\boldsymbol{x}},{\boldsymbol{v}}))=[\xi_1,\xi_0]$.
This completes the proof.
\end{proof}

\section{Maps without observable Lyapunov irregular sets}\label{S_wo_LIS}

In this section we give the proof of Theorem \ref{thm_B}. 
Throughout this section, we suppose that $F:D\longrightarrow D$ is a piecewise $\mathcal C^\omega$ expanding map on an open rectangle $D$ 
and $D'=D_1\cup\cdots\cup D_p$ is a splitting of $D$ (modulo zero Lebesgue  measure sets) satisfying the conditions in Subsection \ref{ss_PEM}.
In particular, $F|_{D_i}$ is extended to a $\mathcal C^\omega$-diffeomorphism $F_i:\mathcal{N}(\overline D_i)\longrightarrow \mathbb{R}^2$ 
for $i=1,\dots,p$.
We denote the support of a Borel probability measure $\nu$ by $\mathrm{supp} (\nu )$.
We first show the following key proposition, which enables us to apply the classical Oseledets multiplicative ergodic theorem. 

\begin{prop}\label{prop:0317}
There are finitely many $\mathrm{Leb}$-absolutely continuous ergodic invariant probability measures $\mu_1,\ldots , \mu_q$ such that for $\mathrm{Leb}$-almost every ${x}\in D$, 
 there exists an integer $N$ such that $F^N({x}) \in \bigcup_{j=1}^q \mathrm{supp}(\mu_j)$. 
\end{prop}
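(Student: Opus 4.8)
The plan is to build Proposition~\ref{prop:0317} on the spectral theory of the Perron--Frobenius (transfer) operator for piecewise real analytic expanding maps developed by Buzzi~\cite{bu}. First I would recall that, since $F$ is piecewise $\mathcal C^\omega$ expanding on the planar rectangle $D$, the transfer operator $\mathcal L$ acting on an appropriate Banach space of (generalized bounded variation, or anisotropic) functions is quasi-compact with spectral radius $1$, and the peripheral spectrum consists of finitely many eigenvalues that are roots of unity with finite-dimensional generalized eigenspaces. The real analyticity is what rules out the pathologies possible in the merely $\mathcal C^r$ case (this is precisely the contrast with Theorem~\ref{thm_A}): it forces the bad set where $F$ fails to be expanding or smooth, together with all its iterated preimages, to be controlled, so that Buzzi's Lasota--Yorke-type inequality holds and only finitely many ergodic $\mathrm{Leb}$-absolutely continuous invariant probability measures $\mu_1,\dots,\mu_q$ exist; moreover each $\mu_j$ has a density of bounded variation supported on a finite union of ``regular'' pieces, and the union of the basins of the $\mu_j$ has full Lebesgue measure.

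Next I would turn the statement about full-measure basins into the sharper statement about landing in $\bigcup_j \mathrm{supp}(\mu_j)$ in finitely many steps. The key point is that for a piecewise expanding map the support of an absolutely continuous invariant measure is (up to a Lebesgue-null set) a finite union of subrectangles, or more precisely a set with nonempty interior on which the density is bounded below; one then uses the standard transfer-operator argument showing that for $\mathrm{Leb}$-a.e.\ $x$ in the basin of $\mu_j$, the forward orbit of $x$ eventually enters the interior of $\mathrm{supp}(\mu_j)$. I would make this quantitative by covering $D$ (modulo null sets) by finitely many open rectangles and using that $F$ is expanding, so that images $F^n(Q)$ of a small rectangle $Q$ eventually cover a fixed open ball inside some $\mathrm{supp}(\mu_j)$; combined with the fact that $\mathrm{Leb}$-a.e.\ point is in some basin, a measure-theoretic exhaustion argument (or a direct appeal to the ergodic decomposition of $\mathrm{Leb}$-a.c.\ measures) gives the existence of the integer $N=N(x)$ for a.e.\ $x$.

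Concretely, the ordered steps are: (i) invoke Buzzi~\cite{bu} to get quasi-compactness of $\mathcal L$ on the relevant Banach space, hence finitely many ergodic $\mathrm{Leb}$-a.c.\ invariant probability measures $\mu_1,\dots,\mu_q$ whose basins cover $D$ up to a $\mathrm{Leb}$-null set; (ii) record that each $\mathrm{supp}(\mu_j)$ has nonempty interior and the density of $\mu_j$ is bounded away from $0$ on a subset of positive measure of each of its ``ergodic components'' pieces; (iii) show, using expansion plus bounded distortion on the analytic pieces, that any set of positive Lebesgue measure has some forward image containing an open ball inside one of the $\mathrm{supp}(\mu_j)$; (iv) combine (i) and (iii) via a Borel--Cantelli/exhaustion argument to conclude that for $\mathrm{Leb}$-a.e.\ $x$ there is an $N$ with $F^N(x)\in\bigcup_{j=1}^q\mathrm{supp}(\mu_j)$, taking care of the $\mathrm{Leb}$-null exceptional set (preimages of the singularity graph) using \eqref{eqn_LebU}.

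The main obstacle I anticipate is step (iii)--(iv): translating ``$x$ lies in the basin of $\mu_j$'' (a statement about Birkhoff averages of $\delta_x^{(n)}$) into ``$F^N(x)\in\mathrm{supp}(\mu_j)$ for some finite $N$'' requires knowing that $\mathrm{supp}(\mu_j)$ actually absorbs a definite proportion of nearby orbits, which in turn needs the lower bound on the invariant density and a covering/distortion estimate that is uniform over the finitely many analytic branches. A secondary nuisance is verifying that Buzzi's framework applies verbatim to our setting of a piecewise $\mathcal C^\omega$ expanding map on a rectangle with a singularity set that is a finite graph of analytic curves --- i.e.\ checking the hypotheses on the complexity of the singularity set so that the Lasota--Yorke inequality and hence quasi-compactness hold; once that is in place, the finiteness of $q$ and the full-measure basin decomposition are standard consequences of quasi-compactness of $\mathcal L$.
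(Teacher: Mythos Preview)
Your starting point---invoking Buzzi's Lasota--Yorke inequality to get quasi-compactness of $\mathcal L$ and hence finitely many absolutely continuous ergodic invariant measures $\mu_1,\dots,\mu_q$---matches the paper. From there, however, the paper takes a shorter, purely functional-analytic route that sidesteps exactly the obstacle you flag in steps (iii)--(iv).

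The paper never argues geometrically about basins, interior of supports, bounded distortion, or covering by balls. Instead, after recording (via two lemmas built on the spectral decomposition) that $\mathcal L^{kn}\varphi\to\sum_j\lambda_j(\varphi)h_j$ in $E\subset L^1(\mathrm{Leb})$ for suitable positive functionals $\lambda_j$ and densities $h_j=d\mu_j/d\mathrm{Leb}$, it sets $A=D\setminus\bigcup_j\mathrm{supp}(\mu_j)$ and uses the duality
\[
\int_D 1_A\circ F^{kn}\,d\mathrm{Leb}=\int_D 1_A\cdot\mathcal L^{kn}1_D\,d\mathrm{Leb}.
\]
The right-hand side tends to $\sum_j\lambda_j(1_D)\int_D 1_A\, h_j\,d\mathrm{Leb}=0$, since $1_A h_j\equiv 0$ by the definition of $A$. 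Hence $\mathrm{Leb}(F^{-kn}(A))\to 0$; and since the set $B$ of points whose orbit never enters $\bigcup_j\mathrm{supp}(\mu_j)$ satisfies $B\subset F^{-kn}(A)$ for every $n$, one concludes $\mathrm{Leb}(B)=0$ immediately.

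So the ``translation from basin to finite hitting time'' that worries you is dispatched in one line by the spectral convergence, with no need for your steps (ii)--(iv). Your geometric expansion/distortion/exhaustion plan is not unreasonable in spirit, but it is considerably heavier, and as you yourself note you have not actually closed the gap there; the paper's argument shows that this gap is an artifact of the route you chose rather than an intrinsic difficulty of the proposition.
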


\begin{remark}
As we will see below, it is essential in the proof of Theorem \ref{thm_B}  that   the orbit of a point in a Lebesgue full measure set  hits the support of $\mu_j$ in a \emph{finite time}. 
This is contrastive to the example in Section \ref{S_NCP} in which the orbit of any point in a positive Lebesgue measure set does not hit  the support of any ergodic invariant probability measure in a finite time (although the $\omega$-limit set of the orbit includes the support of an ergodic invariant probability measure). 
\end{remark}

We will prove Proposition \ref{prop:0317} under the help of previous 
work by Tsujii \cite{tj_C}
 in a functional theoretic framework.
We first recall that  the \emph{Perron--Frobenius operator}  $\mathcal L: L^1(\mathrm{Leb}) \longrightarrow L^1(\mathrm{Leb})$ of 
 $F$ is given by
\[
\mathcal L \varphi (x) =\sum _{F(y)=x} \frac{ \varphi (y)}{\vert \det DF(y) \vert} = \sum _{i=1}^p  \frac{1_{D_i}\cdot \varphi }{\vert \det DF \vert} \circ F_i^{-1}(x)
\]
for $\varphi \in L^1 (\mathrm{Leb})$, $x\in D$. 
It is straightforward to see that
\begin{equation}\label{eq:duality}
\int _D \psi \circ F^n \cdot \varphi \,d\mathrm{Leb} =\int _D \psi  \cdot \mathcal L^n \varphi \,d\mathrm{Leb} 
\end{equation}
for $\psi\in L^\infty (\mathrm{Leb})$, $\varphi \in L^1(\mathrm{Leb})$ and $n\geq 1$, 
so that several statistical properties of $f$ are reduced to  nice spectral properties of $\mathcal L$ on a good Banach space.
A standard reference for Perron--Frobenius operators 
 is  the monographs of Baladi \cite{ba1,ba2}, and here we just summarize  known  result related to our purpose.

We recall that the Perron--Frobenius operator is said to satisfy the \emph{Lasota--Yorke inequality} on a Banach space $E\subset L^1(\mathrm{Leb})$ equipped with the norm $\Vert \cdot \Vert$ if  there exist  a positive integer $n_0$ and real numbers $0<\alpha <1$, $ \beta >0$   such that $\Vert \varphi\Vert _{L^1} \leq \Vert \varphi\Vert$ for each $\varphi \in E$,  the inclusion $(E, \Vert  \cdot  \Vert) \hookrightarrow (L^1(\mathrm{Leb}), \Vert \cdot \Vert _{L^1})$ is compact, $\mathcal L$ can be extended to a bounded operator on $E$  and 
\begin{equation}\label{eq:forProp35}
\Vert \mathcal L^{n_0}\varphi \Vert \leq \alpha \Vert \varphi \Vert + \beta  \Vert \varphi \Vert _{L^1} \quad \text{for each $\varphi \in E$}.
\end{equation}
It is well known as the Ionescu-Tulcea--Marinescu and Hennion theorem (\cite{he, hh}) that if $\mathcal L$ satisfies the Lasota--Yorke inequality on $E$, then the essential spectral radius of $\mathcal L$ is 
bounded by $\alpha$.

It also follows from the Lasota--Yorke inequality  that the spectral radius of $\mathcal L: E\to E$ is $1$  (so that $\mathcal L$ is quasi-compact)  by a standard argument, as follows: 
$\Vert \mathcal L \varphi\Vert _{L^1} \leq \Vert \varphi \Vert _{L^1}$ for all $\varphi \in L^1(\mathrm{Leb})$ due to \eqref{eq:duality}.
Hence,  by using \eqref{eq:forProp35} repeatedly, we get that for each $n=n_0k +l$ with $k, l \in \mathbb N_0$,  $0\leq l \leq n_0-1$, 
\[
\Vert \mathcal L^{n}\varphi \Vert \leq C_1^l \alpha ^k \Vert \varphi \Vert + C_1^l\beta (1+\alpha + \cdot + \alpha ^{k-1}) \Vert \varphi \Vert _{L^1} \leq C_2\Vert \varphi \Vert 
\]
where $C_1=\max\{ 1, \Vert \mathcal L\Vert\}$ and $C_2=C_1^{n_0}(1+\beta (1-\alpha)^{-1})$, 
and thus  the spectral radius of $\mathcal L:E \to E$ is bounded by $1$. 
 Furthermore, $1$ is in the spectrum of $\mathcal L:E \to E$ 
 because
 $1_D$ is the eigenfunction with eigenvalue $1$ of $\psi \mapsto \psi \circ F$ on $L^\infty(\mathrm{Leb})$, so that $1$ is an eigenvalue of the adjoint operator $\mathcal L^*$ of $\mathcal L$ on $ E^* \supset(L^1(\mathrm{Leb}))^* \cong L^\infty(\mathrm{Leb})$ due to  \eqref{eq:duality}.

On the other hand, 
for real analytic planar piecewise expanding maps,  
Buzzi and Tsujii showed independently the Lasota--Yorke inequality (where $E$ is   the quasi-Lipschitz space $\mathfrak{B}\mathfrak{V}$ in \cite[Proposition 1.4]{bu}  and the bounded variation space $BV$ in \cite[Proposition 10]{tj_C}, respectively). 
We also see \cite{th,ba2} and references therein for several generalizations of their results.

Under the help of the above observations, one can obtain the following lemma.
\begin{lemma}\label{lem:0519c}
Assume that $\mathcal L$ satisfies the Lasota--Yorke inequality on a Banach space $E\subset  L^1(\mathrm{Leb})$. 
Furthermore, we assume that 
\begin{itemize}
\item[(D)]
$E\cap L^\infty(\mathrm{Leb})$ is dense in $E$. 
\end{itemize}
Then, there exist  $k, q\in \mathbb N$ and density functions $h_1,\ldots ,h_q\in E$ with 
mutually disjoint supports such that  $1$ is the only eigenvalue of $\mathcal L^k$ with absolute value $1$ with multiplicity  $q$ and  the eigenspace $E_1$ of $\mathcal L^k$ associated to the eigenvalue $1$ is spanned by $h_1,\ldots ,h_q$.
\end{lemma}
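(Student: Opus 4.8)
The plan is to run the standard quasi-compactness machinery for the Perron–Frobenius operator, now on the space $E$, and then use the density hypothesis (D) to transport positivity information from $L^1$ to $E$. First I would invoke the Ionescu-Tulcea–Marinescu / Hennion theorem, together with the computation already carried out in the excerpt, to record that $\mathcal L : E \to E$ is quasi-compact with spectral radius $1$ and that $1$ lies in the spectrum. Consequently the peripheral spectrum of $\mathcal L$ consists of finitely many eigenvalues on the unit circle, each of finite multiplicity, and there is a spectral decomposition $E = E_{\mathrm{per}} \oplus E_{<1}$, where $E_{\mathrm{per}}$ is the finite-dimensional sum of the generalized eigenspaces for the peripheral eigenvalues and the spectral radius of $\mathcal L$ restricted to $E_{<1}$ is strictly less than $1$.

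Next I would rule out nontrivial Jordan blocks and identify a suitable iterate $\mathcal L^k$. Using $\|\mathcal L^n\varphi\|_{L^1}\le\|\varphi\|_{L^1}$ from \eqref{eq:duality} together with $\|\varphi\|_{L^1}\le\|\varphi\|$, the orbit $\{\|\mathcal L^n\varphi\|_{L^1}\}$ is bounded for every $\varphi\in E_{\mathrm{per}}$; standard arguments (the power-boundedness forced on the peripheral part) show that $\mathcal L$ is semisimple on $E_{\mathrm{per}}$, so all peripheral eigenvalues are semisimple. Since the peripheral eigenvalues are finitely many roots of unity — here one uses that the peripheral spectrum of a positive power-bounded operator whose first-return behaviour is governed by an $L^1$-contraction is a finite union of finite cyclic groups — there is an integer $k\ge 1$ with $\mathcal L^k$ having $1$ as its only peripheral eigenvalue; let $q$ be its multiplicity and $E_1=\ker(\mathcal L^k-\mathrm{id})$ the associated eigenspace, which is $q$-dimensional.

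It remains to produce density functions $h_1,\dots,h_q\in E$ with mutually disjoint supports spanning $E_1$. Here the density hypothesis (D) enters: because $E\cap L^\infty(\mathrm{Leb})$ is dense in $E$ and $\mathcal L$ preserves positivity, the spectral projection $\Pi_1$ onto $E_1$ (a bounded operator on $E$, commuting with $\mathcal L$) maps nonnegative $L^\infty$ functions to elements of $E_1$, and in particular $\Pi_1 1_D$ is a nonnegative, nonzero fixed point of $\mathcal L^k$; by a Yosida–Kakutani type averaging, the ergodic averages $\frac1N\sum_{n=0}^{N-1}\mathcal L^{kn}\varphi$ of nonnegative $\varphi\in E\cap L^\infty$ converge in $E$ to $\Pi_1\varphi$, which is therefore a nonnegative $L^1$-density lying in $E$. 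A standard Frobenius-type decomposition of the fixed-point cone of a positive quasi-compact operator then yields a decomposition $D$ (mod $\mathrm{Leb}$-null sets) into supports of finitely many extremal nonnegative fixed densities $h_1,\dots,h_q$, mutually disjoint, spanning $E_1$; normalizing gives the $\mu_j = h_j\,d\mathrm{Leb}$ that appear in Proposition~\ref{prop:0317}.

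The main obstacle I anticipate is the step controlling the peripheral spectrum: upgrading the bare quasi-compactness statement to the precise claim that, after passing to an iterate, the only unimodular eigenvalue is $1$ and the corresponding eigenfunctions can be chosen nonnegative with pairwise disjoint supports. The quasi-compactness and finiteness come for free, but the positivity/structure of the peripheral eigenspace requires combining the positivity of $\mathcal L$ with the $L^1$-contraction property and with hypothesis (D) — without (D) one cannot guarantee that the spectral projections behave well on enough functions to run the ergodic averaging argument. I would present the cyclic-group structure of the peripheral point spectrum and the disjoint-support decomposition as the heart of the lemma, citing the corresponding facts for positive quasi-compact operators (as developed in Baladi \cite{ba1,ba2}) and filling in the $L^1$-contraction input from \eqref{eq:duality}.
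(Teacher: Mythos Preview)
Your outline is correct and follows essentially the same route as the paper: Hennion/Ionescu-Tulcea--Marinescu for quasi-compactness, then the structure of the peripheral spectrum (cyclicity and semisimplicity) to pass to an iterate $\mathcal L^k$ with only eigenvalue $1$, and finally a decomposition of the fixed space into nonnegative densities with disjoint supports via the projection of $1_D$; the paper cites Thomine~\cite{th} and Baladi--Gou\"ezel~\cite{bg} for these two steps rather than Baladi~\cite{ba1,ba2}. One point to sharpen: the paper (following Thomine's ``First step'') uses condition~(D) already to establish the cyclic-group structure and the absence of Jordan blocks---roughly, one needs that for a peripheral eigenfunction $\varphi$ the modulus $|\varphi|$, or an $L^\infty$ approximant thereof, again lies in $E$---whereas you invoke~(D) only later for the disjoint-support decomposition; your justification of the roots-of-unity claim via ``positive power-bounded operator'' alone is not quite enough without this input (compare Remark~\ref{rmk:0519}, where the Banach-lattice route is discussed as an alternative but is not known to apply here).
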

\begin{remark}
Baladi and Gou\"ezel showed in \cite[Theorem 33]{bg}   exponential decay of   correlation functions 
 for a large class of piecewise hyperbolic maps  under the assumptions including that the condition (D).
By modifying their argument, Thomine showed in \cite[Section 6]{th}    exponential decay of   correlation functions for a large class of  piecewise expanding maps. 
Both works proved their versions of Lemma \ref{lem:0519c}  (for general $E$ which satisfies certain conditions including the condition (D) in \cite{bg} and for Sobolev spaces $E$ in \cite{th}), and moreover they used the versions to show their exponential decay of correlation functions.
In fact, Lemma \ref{lem:0519c} can be proven  by literally repeating their arguments, so below we just give an outline of the proof.

The advantage of the functional space $\mathfrak{B}\mathfrak{D}$ is the fact that $\mathfrak{B}\mathfrak{D} \subset L^\infty(\mathrm{Leb}) \cap BV$, so that  the condition (D) is satisfied for $\mathfrak{B}\mathfrak{D}$, 
whereas $BV\not\subset L^\infty(\mathrm{Leb})$ 
 (refer to \cite[Lemma 1]{bk}).
 Hence, we shall only   consider the case when $E=\mathfrak{B}\mathfrak{V}$.\footnote{It seems that Thomine  used a version of Lemma \ref{lem:0519c}  in   the case  $E=BV$   in the proof of  \cite[Theorem  2.6]{th}, but we have no idea whether the condition (D) holds for $BV$, 
so we will not apply   Lemma \ref{lem:0519c} to $E=BV$.}
 \end{remark}

 \begin{proof}[Outline of the proof of Lemma \ref{lem:0519c}]
As observed above, since $\mathcal L$ satisfies the Lasota--Yorke inequality on a Banach space $E\subset L^1(\mathrm{Leb})$, 
the essential spectral radius $\rho _{\mathrm{ess}}(\mathcal L)$ of $\mathcal L$ is strictly smaller than $1$ and the spectral radius of $\mathcal L$ is $1$. 
Therefore, the spectrum of $\mathcal L$ outside of the ball of radius $\rho _{\mathrm{ess}}(\mathcal L)$ consists of finitely many eigenvalues with finite multiplicities.
 Furthermore, by repeating the argument in ``First step'' of \cite[Section 6]{th} (using the condition  (D)), one can see that the eigenvalues of $\mathcal L$ with absolute value $1$ is a cyclic group and have  no nontrivial Jordan block.
Hence, 
  there is an integer   $k\in \mathbb N$ such that 
   $1$ is the only eigenvalue of $\mathcal L^k$ with absolute value $1$ and the multiplicity of $1$ is finite (denote it by $q$).
  
Let  $\pi_1:E\longrightarrow E_1$ be the projection to   the eigenspace $E_1$  associated to the eigenvalue $1$   and $\mu:= \pi _1(1_D)d\mathrm{Leb}$.   
By repeating the former part of the argument in ``Second step'' of \cite[Section 6]{th}, one can show that there are mutually disjoint $F^k$-invariant sets $B_1, \ldots , B_q$ such that $\mu_j:=h_j d\mathrm{Leb}$ are absolutely continuous ergodic $F^k$-invariant probability measures, where $h_j:=\frac{1_{B_j} \pi _1(1_D)}{\mu (B_j)}$, 
and $E_1$ is spanned by $h_1, \ldots , h_q$ as required.
 \end{proof}

\begin{lemma}\label{lem:0519d}
Let $k$ and $h_1,\ldots ,h_q$ be as in Lemma \ref{lem:0519c}. 
Then, 
 there exist positive linear functionals 
  $\lambda _1, \ldots,\lambda _q $  on $E$ such that
 \begin{align}\label{eq:0519a}
 \lim_{n\to\infty}\left\Vert \mathcal L^{kn}\left(\varphi -\sum_{j=1}^q \lambda _j(\varphi )h_j \right)\right\Vert =0
\end{align}
for every $\varphi \in E$.
\end{lemma}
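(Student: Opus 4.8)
The plan is to obtain $\lambda_1,\ldots,\lambda_q$ from the spectral projection onto the peripheral eigenspace and to deduce \eqref{eq:0519a} from the spectral gap provided by Lemma \ref{lem:0519c}. Since $\mathcal L$ satisfies the Lasota--Yorke inequality, $\mathcal L^k$ is quasi-compact on $E$ with spectral radius $1$ and essential spectral radius strictly less than $1$; by Lemma \ref{lem:0519c}, the only eigenvalue of $\mathcal L^k$ on the unit circle is $1$, it carries no nontrivial Jordan block, and its eigenspace is $E_1=\mathrm{span}\{h_1,\ldots,h_q\}$. The Riesz functional calculus then yields a bounded $\mathcal L^k$-invariant splitting $E=E_1\oplus E_2$, where $\pi_1:E\to E_1$ is the spectral projection associated to the eigenvalue $1$ and $E_2=(\mathrm{Id}-\pi_1)E$. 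Expanding $\pi_1\varphi$ in the basis $\{h_j\}$ defines linear functionals $\lambda_1,\ldots,\lambda_q$ on $E$ via $\pi_1\varphi=\sum_{j=1}^q\lambda_j(\varphi)h_j$; each $\lambda_j$ is bounded because $\pi_1$ is bounded and $E_1$ is finite-dimensional, so the coordinate functionals on $E_1$ are continuous. By construction, $\varphi-\sum_{j=1}^q\lambda_j(\varphi)h_j=(\mathrm{Id}-\pi_1)\varphi\in E_2$.

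Next I would establish \eqref{eq:0519a}. On $E_2$ the operator $\mathcal L^k$ has spectrum contained in a disk of some radius $r<1$ (the maximum of $\rho_{\mathrm{ess}}(\mathcal L^k)$ and the moduli of the finitely many eigenvalues of $\mathcal L^k$ different from $1$), so the spectral radius of $\mathcal L^k|_{E_2}$ is at most $r$. By Gelfand's formula $\Vert(\mathcal L^k|_{E_2})^n\Vert^{1/n}\to r'<1$, hence $\Vert(\mathcal L^k|_{E_2})^n\Vert\to0$. Applying this to $\psi:=(\mathrm{Id}-\pi_1)\varphi\in E_2$ gives $\Vert\mathcal L^{kn}\psi\Vert=\Vert(\mathcal L^k|_{E_2})^n\psi\Vert\to0$, which is precisely \eqref{eq:0519a}.

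It remains to check positivity of the $\lambda_j$. Fix $\varphi\in E$ with $\varphi\geq0$ in $L^1(\mathrm{Leb})$. Since the Perron--Frobenius operator is positive, $\mathcal L^{kn}\varphi\geq0$ for all $n$, and by \eqref{eq:0519a} together with $\Vert\cdot\Vert_{L^1}\leq\Vert\cdot\Vert$ we get $\mathcal L^{kn}\varphi\to\sum_{j=1}^q\lambda_j(\varphi)h_j$ in $L^1(\mathrm{Leb})$; hence the limit is $\geq0$ a.e. Because the density functions $h_1,\ldots,h_q$ have mutually disjoint supports (Lemma \ref{lem:0519c}), on the positive-measure set $\{h_j>0\}$ this inequality reads $\lambda_j(\varphi)h_j\geq0$, so $\lambda_j(\varphi)\geq0$. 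Thus each $\lambda_j$ is a positive linear functional, as required. (Incidentally, applying $\pi_1$ to $h_i\in E_1$ and using linear independence of the $h_j$ gives $\lambda_j(h_i)=\delta_{ij}$, a normalization we will invoke in the proof of Theorem \ref{thm_B}.)

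The only genuinely delicate point is that the convergence in \eqref{eq:0519a} must take place in the $E$-norm rather than merely in $L^1(\mathrm{Leb})$; this is exactly what the spectral gap of $\mathcal L^k$ on $E$—guaranteed by the Lasota--Yorke inequality and Lemma \ref{lem:0519c}—supplies, and the rest is routine. As with Lemma \ref{lem:0519c}, this argument is classical and parallels the treatments in \cite{th,bg}, so in the text we give only this outline.
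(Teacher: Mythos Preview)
Your proof is correct and follows essentially the same route as the paper: both define the $\lambda_j$ as coordinate functionals of the spectral projection $\pi_1$ onto $E_1$, and both deduce \eqref{eq:0519a} from the fact that $\mathcal L^k$ restricted to the complementary invariant subspace has spectral radius strictly less than $1$. The only cosmetic difference is in the handling of linearity and positivity: the paper verifies additivity of the $\lambda_j$ by an explicit limit computation (and then remarks that positivity and homogeneity follow from positivity of $\mathcal L^k$), whereas you take linearity as immediate from that of $\pi_1$ and spell out the positivity argument via the disjoint supports of the $h_j$---which is precisely the argument the paper is alluding to.
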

\begin{proof}
By construction, the spectral radius   of $\mathcal L^{k} -\pi _1$   is strictly smaller than $1$, so that one can find   constants $C>0$ and $0<\rho <1$ such that
\[
\Vert \mathcal L^{kn} \varphi -\pi _1 (\varphi ) \Vert \leq C\rho ^n\Vert \varphi \Vert
\]
for any $\varphi \in E$ and $n\geq 1$.
Since $E_1=\pi _1(E)$ is spanned by $h_1,\ldots, h_q$ for each $\varphi \in E$, one can find real numbers $\lambda _1(\varphi ), \ldots , \lambda _q(\varphi )$ such that $\pi_1(\varphi )=\sum _{j=1}^q \lambda _j(\varphi ) h_j$, and get the desired convergence. 

It remains to show that the $\lambda_j$ are positive linear functionals, but it can be done by a standard argument (cf.~\cite{ry}). 
Denote $\varphi -\sum_{j=1}^q \lambda _j(\varphi )h_j $ by $\widetilde\pi_1(\varphi )$.
For $\varphi, \psi \in E$, 
since $\varphi +\psi \in E$ and $h_j$ is $\mathcal L^k$-invariant, we have
\begin{align*}
0&= \lim _{n\to\infty} \mathcal L^{kn}\widetilde\pi_1\left(\varphi +\psi\right)\\
&= \lim _{n\to\infty} \Big(\mathcal L^{kn}\widetilde\pi_1\left(\varphi  \right)+
\mathcal L^{kn}\widetilde\pi_1\left( \psi\right)
+
\mathcal L^{kn}\Big(\sum _{j=1}^q (\lambda _j(\varphi +\psi) - \lambda _j(\varphi ) -\lambda _j(\psi)) h_j\Big)\Big)\\
&= \sum _{j=1}^q (\lambda _j(\varphi +\psi) - \lambda _j(\varphi ) -\lambda _j(\psi)) h_j 
\end{align*}
Therefore, since $h_1, \ldots ,h_q$ are linearly independent, we get the additivity of $\lambda _j$ for each $1\leq j\leq q$. 
The positivity  and homogeneity of $\lambda _j$ immediately follow from the linearity and positivity of $\mathcal L^k$.
\end{proof}

\begin{remark}\label{rmk:0519} 
One may be able to apply  Bartoszek's asymptotic periodicity theorem for abstract quasi-compact positive operators on Banach lattices (\cite[Theorem 1, Proposition 1 and Lemma 5]{br}).
In fact, it was shown that, if  $E\subset L^1(\mathrm{Leb})$ is a Banach lattice (not necessarily satisfies the condition (D)), the spectral radius of $\mathcal L$ is $1$ and the essential spectral radius of $\mathcal L$ is strictly smaller than $1$, then
 there exist finitely many linearly independent density functions  $h_1',   \ldots , h_{q'}'\in E$, positive functionals $\lambda _1', \ldots,\lambda _{q'}'\in E^*$ and a permutation $\alpha$ such that $\mathcal Lh_j'= h_{\alpha(j)}'$ and 
\[
 \lim_{n\to\infty}\left\Vert \mathcal L^n\left(\varphi -\sum_{j=1}^{q'} \lambda _j'(\varphi )h_j '\right)\right\Vert =0
\]
for every $\varphi \in E$.
This immediately implies the key estimate \eqref{eq:0519a}.
However, we have no idea whether $BV$ or $\mathfrak{B}\mathfrak{D}$ is a Banach lattice.
\end{remark}

\begin{proof}[Proof of Proposition \ref{prop:0317}]
Let $E=\mathfrak{B}\mathfrak{D}$, then by the previously-mentioned Buzzi's result and the remark above, $\mathcal L$ satisfies the Lasota--Yorke inequality on $E$ and  the condition (D) holds for $E$, so that  Lemmas \ref{lem:0519c} and \ref{lem:0519d} are applicable for the proof.
Let $k$, $h_1, \ldots , h_q$ and $\lambda _1,\ldots ,\lambda _q$ be as in Lemma \ref{lem:0519c}.
Let $\mu _j$ denote the probability measure whose density function is $h_j$  for $j=1,\ldots ,q$. 
Let $A:=D\setminus  \bigcup_{j=1}^q \mathrm{supp} (\mu_j)$, so that $\int _D 1_A \cdot h_j \, d\mathrm{Leb}=0$ for each $j=1,\ldots ,q$.
Let $B$ be the set of points ${x}\in D$ such that $F^n({x})\ \in A$ for all $n\in \mathbb N$. 
It suffices to show that $\mathrm{Leb}(B) =0$.

It follows from \eqref{eq:duality} and \eqref{eq:0519a}  that, for any $\psi\in L^\infty(\mathrm{Leb})$,
 \begin{align*}
&\left\vert \int _D\psi \circ F^{nk}\cdot  \varphi \, d\mathrm{Leb} -\sum_{j=1}^q \lambda _j(\varphi ) \int _D\psi \cdot h_{j} \, d\mathrm{Leb} \right\vert\\
&\hspace{50pt}= \left\vert \int _D\psi \cdot   \mathcal L^{nk}\left(\varphi -\sum_{j=1}^q \lambda _j(\varphi )h_j \right) d\mathrm{Leb} \right\vert 
\to 0 \quad (n\to\infty).
 \end{align*}
 In particular we get
 \[
\lim _{n\to\infty} \int _D 1_A \circ F^{nk}  d\mathrm{Leb} =0.
 \]
 On the other hand,
by definition of $B$, for any $n\in \mathbb N$,
\begin{align*}
\mathrm{Leb}(B) = \int _B 1_D\,d\mathrm{Leb}   = \int _B 1_{A} \circ F^{nk}\,d\mathrm{Leb} 
\leq \int _D 1_{A} \circ F^{nk}\,d\mathrm{Leb}.
\end{align*}
Combining these estimates, we get that $\mathrm{Leb}(B)=0$. 
This completes the proof of Proposition \ref{prop:0317}.
\end{proof}

Now we are ready to prove Theorem \ref{thm_B}.

\begin{proof}[Proof of Theorem \ref{thm_B}]
Since  $f$ can be extended to a differential map on a  compact set $\overline D_i$ and $D$ is the union of such compact sets,  the function ${x}\mapsto \max\left\{ \log \left\lVert DF({x}) \right\rVert , 0\right\}$  is bounded, in particular, $\mu_j$-integrable for each $j=1,\dots, q$.
Hence, we can apply  the Oseledets multiplicative ergodic theorem to the cocycle $(DF^{n})_{n\in \mathbb N}$ over the ergodic measure-preserving system $F$ on $(D, \mu_j)$ for each $j=1,\dots,q$: 
There exists  a $\mu_j$-full measure set $X_j$  such that 
 \begin{equation}\label{eq:0319a}
\lim_{n\to\infty}\frac{1}{n}\log \left\lVert DF^n({x}){\boldsymbol{v}}\right\rVert\quad \text{exists for all }({x},{\boldsymbol{v}})\in X_j \times (\mathbb{R}^2\backslash \{\mathbf{0}\}).
\end{equation}
 Let $V_j:=\bigcup _{N=0}^\infty F^{-N}(\mathrm{supp}(\mu_j))$. 
 Then, by Proposition \ref{prop:0317}, 
 \begin{equation}\label{eq:0319}
 \mathrm{Leb}\biggl(D\setminus \bigcup _{j=1}^q V_j\biggr) =0.
 \end{equation}
 Note that for each measurable set $A\subset \mathrm{supp} (\mu_j)$, if  $\mathrm{Leb}(A) >0$, then  one can find 
a non-negative integer $n$ such that $\mathrm{Leb}(A_n) >0$, where $A_n:=\{  x \in A\mid \frac{1}{n+1} \leq h_j(x) < \frac{1}{n} \} $ 
for $n=1,2,\dots$, $A_0:=\{ {x} \in A\mid h_j({x}) \geq 1 \} $ and $h_j$ is the Radon-Nikodym derivative  of $\mu_j$. 
 Thus  we get
 $\mu_j(A) \geq \mathrm{Leb}(A_n)/(n+1) >0$.  
 By applying the contraposition of this implication to the $\mu_j$-zero measure set $\mathrm{supp}(\mu_j) \setminus X_j$,  we get  $\mathrm{Leb}(\mathrm{supp}(\mu_j) \setminus X_j)=0$.
Hence 
\begin{equation*}
\mathrm{Leb}\biggl(V_j  \setminus \bigcup_{N=0}^\infty F^{-N}(X_j) \biggr) =0.
\end{equation*}
By this fact together with \eqref{eq:0319}, we conclude that for $\mathrm{Leb}$-almost every ${x}\in D$, 
 \begin{equation}\label{eq:0319b}
F^N({x}) \in X_j \quad \text{for some integer $j\in \{1,\dots,q\}$ and $N\geq 0$.}
\end{equation}
Let ${x}'=F^N({x})$ and ${\boldsymbol{v}}'= DF^N({x}) {\boldsymbol{v}}$ for any element ${\boldsymbol{v}}$ of $T_{{x}} D\setminus \{\mathbf{0}\}$.
Then we observe that
\begin{align*}
&\left\vert\, \frac{1}{n} \log \left\Vert DF^n({x}){\boldsymbol{v}}\right\Vert  - \frac{1}{n} \log \left\Vert DF^n({x}'){\boldsymbol{v}}'\right\Vert\, \right\vert\\
&\hspace{50pt}\leq \frac{2N}{n} \max\biggl\{\log \left\Vert DF_i({y})\right\Vert\,\Big|\, {y}\in \overline D_i, 
i=1,\dots,p\biggr\} \to 0\quad\text{as $n\to\infty$}.
\end{align*}
This observation together with \eqref{eq:0319a} and \eqref{eq:0319b} completes the proof.
\end{proof}

\subsection*{Acknowledgements.}
The authors would like to thank Shin Kiriki and Masato Tsujii for helpful comments and suggestions.
This work was partially supported by JSPS KAKENHI Grant Number 19K14575, 22K03342 
and WISE program (MEXT) at Kyushu University.

\end{document}